\DeclareMathOperator\im{Im}
\DeclareMathOperator\lk{lk}
\DeclareMathOperator\Hom{Hom}
\theoremstyle{definition}
\newtheorem{definition}{Definition}[section]
\newtheorem*{definition*}{Definition}
\newtheorem*{theorem*}{Theorem}
\newtheorem{proposition}[definition]{Proposition}
\newtheorem*{proposition*}{Proposition}
\newtheorem{corollary}[definition]{Corollary}
\newtheorem{lemma}[definition]{Lemma}
\newtheorem*{lemma*}{Lemma}
\newtheorem*{Note*}{Note}
\newtheorem*{example*}{Example}
\newtheorem*{question*}{Question}
\newtheorem*{conjecture*}{Conjecture}
\numberwithin{equation}{section}
\begin{document}

\title[A PROPERTY OF $\gamma$ CURVES]{A PROPERTY OF $\gamma$ CURVES DERIVED FROM TOPOLOGY OF RELATIVE TRISECTED 4-MANIFOLDS}
\author{HOKUTO TANIMOTO}
\keywords{trisection; relative trisection; Homology; Intersection form; Torelli group}
\thanks{{\it Data sharing not applicable to this article as no datasets were generated or analysed during the current study.}}
\date{}
\maketitle
\vspace{-2ex}
\begin{abstract}
Any two $(g; 0, 0, k)$-trisected closed 4-manifolds are related to each other by cutting and regluing in regard to $\gamma$ curves by an element of the Torelli group. Moreover, Lambert-Cole showed that this element can be replaced by an element of the Johnson kernel. We consider similar situation in the relative case. Furthermore, we relate this situation to cork twist.
\end{abstract}

\section{Introduction}
A trisection was introduced by Gay and Kirby in \cite{GK1} as a decomposition of compact, connected 4-manifolds that consists of three 1-handlebodies. Castro, Gay and Pinz\'on-Caicedo dealt with a relative trisection in \cite{CGPC1} and \cite{CGPC2} more precisely.
Regarding a trisected 4-manifold, its homology and intersection form can be expressed in terms of its trisection diagram in the way shown by Feller, Klug, Schirmer and Zemke in \cite{FKSZ1}, and the way by Florens and Moussard in \cite{FM1}. Feller, Klug, Schirmer and Zemke also showed that closed $(g; 0, 0, k)$-trisections admit ``algebraically trivial'' diagrams, and thus any two $(g; 0, 0, k)$-trisected closed 4-manifolds are related to each other by cutting and regluing in regard to $\gamma$ curves of diagrams by an element of the Torelli group of the central surface. Moreover, Lambert-Cole \cite{L-C1} proved an analogy of Morita's result \cite{Mor1} for closed trisections, that is, an element of the Torelli group can be replaced by an element of the Johnson kernel.

In this paper, we consider similar situation regarding the relative trisected 4-manifolds. $(g; 0, 0, k)$-closed trisections are equivalent to $(g, (l, l, k); p, b)$-relative trisections, where $l = 2p+b-1$. It is relatively easy to show that $(g; 0, 0, k)$-trisections admit algebraically trivial diagrams, but even this triviality of diagrams is complicated in the relative case for several reasons. For example, homology classes of $\alpha$-, $\beta$-curves may not form a basis of $H_1(\Sigma)$. We show that the triviality of relative trisection diagrams also holds under some topological conditions.

This paper is organized as follows. In Section 2, we review definitions regarding relative trisections and their diagrams, and then state the main algebraic topological conditions of 4-manifolds for the triviality. In Section 3, we consider several facts which relate foundations of relative trisection and a proof of the main result. In Section 4, we prove the main theorem. In Section 5, we introduce corollaries related to the special relative trisection diagram and the Torelli group.

\subsection*{Acknowledgements}
The author would like to thank Hisaaki Endo for many discussions and encouragements. This work was supported by JST SPRING, Grant Number JPMJSP2106.

\section{Main results}
Trisection is a 4-dimensional analogy of Heegaard splitting. In relative trisection, sutured Heegaard splitting plays an important role to define and understand it.

Let $\Sigma_{g,b}$ be a connected, oriented genus $g$ surface with $b$ boundary components. We call a collection of $g-p$ disjoint simple closed curves in $\Sigma_{g,b}$ which cuts $\Sigma_{g,b}$ into a genus $p$ surface with $b$ boundary components {\it $(g,p)$-cut system of curves}. A {\it relative compression body} $C_{g,p;b}$, or simply {\it compression body}, is a cobordism from $\Sigma_{g,b}$ to $\Sigma_{p,b}$ which is constructed using only 2-handles. When a compression body is constructed by attaching 2-handles along cut system $\mu$ on a surface $\Sigma$, we also describe it as $C_{\mu}$. Furthermore, $\Sigma_{\mu}$ denotes the surface which is  cobordant to $\Sigma$ by $C_{\mu}$. Let $C$ be a compression body from $\Sigma$ to $B$. A {\it complete disk system} $D$ for $C$ is a disjoint union of disks $(D, \partial D) \subset (C, \Sigma)$ such that what is obtained by cutting $C$ along $D$ is homeomorphic to $B \times I$. Note that $\partial D$ is a cut system on $\Sigma$. Let $M$ be a compact, connected, oriented 3-manifold. A {\it sutured Heegaard splitting of $M$} is a decomposition such as 
\[ M = C_{g,p;b} \cup_{\phi} (-C_{g,p';b}), \]
where $\phi$ is an orientation-preserving diffeomorphism of $\Sigma_{g,b}$. Two sutured Heegaard splittings are {\it equivalent} if there exists a self-diffeomorphism of $M$ which can be split into two diffeomrphisms between compression bodies. If there exists a sutured Heegaard splitting of $M$, $M$ is a cobordism between surfaces $B$ and $B'$ rel $\partial$. Therefore, we can describe a sutured Heegaard splitting of $(M; B, B')$ as $(C, C')$ by using compression bodies $C$ and $C'$. Moreover, since each $C$ is diffeomorphic to $C_{\mu}$ for a cut system of curves $\mu$ on a surface $\Sigma$, this sutured Heegaard splitting can be described as follows;
\[ M = C_{\mu} \cup_{\Sigma} (-C_{\nu}). \]
When there exists such a decomposition of $M$, we call a 3-tuple $(\Sigma; \mu, \nu)$ {\it sutured Heegaard diagram of $M$}. For two diagrams $(\Sigma; \mu, \nu)$ and $(\Sigma; \mu', \nu')$, $(\Sigma; \mu, \nu)$ is  {\it diffeomorphism and handle slide equivalent} to $(\Sigma; \mu', \nu')$ if they are related by a diffeomorphism between $\Sigma$ and a sequence of handle slides within each $\mu$ and $\nu$. Two sutured Heegaard splitting are equivalent if and only if their diagrams are diffeomorphism and handle slide equivalent. Therefore, we may call simply the latter equivalent.
\vspace{0.5ex}

Let $X$ be a compact, connected, oriented, smooth 4-manifold with connected boundary. We give the definition of relative trisection according to \cite{Tan1}. Let  $g, k= (k_1, k_2, k_3), p, b$ be integers with $g \ge p \ge 0$, $b \ge 1$ and $g+p+b-1 \ge k_i \ge l$.
\begin{definition}
A $(g,k;p,b)$-{\it relative trisection} of $X$ is a decomposition $X = X_1 \cup X_2 \cup X_3$ such that:
\begin{enumerate}
\item[i)] $\Sigma = X_1 \cap X_2 \cap X_3$ is diffeomorphic to $\Sigma_{g,b}$;
   \vspace{0.5ex}
\item[ii)] $X_i \cap X_j = \partial X_i \cap \partial X_j$ for $i \neq j$, and each of them is diffeomorphic to $C_{g,p;b}$;
   \vspace{0.5ex}
\item[iii)] $X_i$ is diffeomorphic to $\natural^{k_i} S^1 \times D^3$;
   \vspace{0.5ex}
\item[iv)] $X_i \cap \partial X$ is diffeomorphic to $\Sigma_{p,b} \times I$;
   \vspace{0.5ex}
\item[v)] $(X_i \cap X_{i+1}) \cup_{\Sigma} -(X_{i-1} \cap X_i)$ is a sutured Heegaard splitting of $\Sigma_{p,b} \times I \, \sharp \, ( \sharp^{k_i-l} S^1 \times S^2)$.
\end{enumerate}
\end{definition}
\noindent Note that we orient each compression body $X_i \cap X_{i+1}$ as a submanifold of $\partial X_i$,  surfaces $\Sigma$ and $(X_i \cap X_{i+1}) \cap \partial X$ as submanifolds of $\partial (X_i \cap X_{i+1})$. 

We can obtain sutured Heegaard diagrams of $\Sigma_{p,b} \times I \, \sharp \, ( \sharp^{k_i-l} S^1 \times S^2)$ from the condition v). We define relative trisection diagram by using them.

\begin{definition}
A  {\it $(g,k;p,b)$-relative trisection diagram} $(\Sigma; \alpha^1, \alpha^2, \alpha^3)$ is a 4-tuple such that each triple $(\Sigma; \alpha^{i+1}, \alpha^i)$ is a sutured Heegaard diagram of $\Sigma_{p,b} \times I \, \sharp \, ( \sharp^{k_i-l} S^1 \times S^2)$.
\end{definition}
\noindent We also describe a diagram as $(\Sigma; \alpha, \beta, \gamma)$ satisfying $\alpha^1 = \alpha$, $\alpha^2 = \beta$, and $\alpha^3 = \gamma$. A diagram determines a unique (up to diffeomorphism) trisected 4-manifold with connected boundary such that $\Sigma$ and $C_{\alpha^i}$ are diffeomorphic to $X_1 \cap X_2 \cap X_3$ and $X_{i-1} \cap X_i$, respectively. Note that we also describe $C_{\alpha^i}$ and $\Sigma_{\alpha^i}$ as $C_i$ and $\Sigma_i$, respectively.
\vspace{0.5ex}

We prepare to state the algebraic triviality of relative trisection diagram. A genus $g$ sutured Heegaard splitting of  $\Sigma_{p,b} \times I \, \sharp \, ( \sharp^{k-l} S^1 \times S^2)$ is {\it standard} if there exists its diagram $(\Sigma; \mu, \nu)$ whose cut systems satisfy 
\begin{align}
i(\mu_i, \nu_j)  = \delta_{ij} \quad & (1 \le i, j \le g-p-k+l), \notag \\ 
\mu_i  = \nu_i \quad & ( g-p-k+l < i ), \notag
\end{align}
where $i(-, -)$ is the geometric intersection number on $\Sigma$. Even if a genus $g$ splitting of $\Sigma_{p,b} \times I$ is standard, $\{[\mu_i], [\nu_i]\}$ may not be a basis of $H_1(\Sigma_{g,b})$. Therefore, even in the case of $(g, (l, l, k); p, b)$-relative trisection, $[\gamma_i]$ may not be expressed as a linear combination of $[\alpha_j], \, [\beta_j]$.

There exist components of $\gamma$ which relate topology of $\partial X$. A relative trisection of $X$ induces an open book decomposition of $\partial X$ whose binder is $\Sigma_1$. Let $\psi : \Sigma_1 \to \Sigma_1$ be its monodromy. We describe a representation matrix of monodromy action $\xi_{\psi} : H_1(\Sigma_1, \partial \Sigma_1) \to H_1(\Sigma_1)$ for an basis of arcs as $A_{\phi}$, where $\xi_{\psi} ([a]) = [\psi (a) - a]$.

We state the main theorem of this paper. Let $d_i$ be $1$ if $1 \le i \le l$, be $0$ otherwise. The following theorem is a relative version of Theorem 4.4 in \cite{FKSZ1}.

\begin{theorem*}
Suppose $H_1(X) = H_1(\partial X) = 0$, and $X$ has a $(g, (l, l, k); p, b)$-relative trisection $\mathcal{T}_X$. Then, $\mathcal{T}_X$ admits a diagram $(\Sigma; \alpha, \beta, \gamma)$, and there exists a collection $\eta$ which consists of $l$ simple closed curves in $\Sigma$ such that:
\begin{enumerate}
\item[(1)] $(\Sigma; \alpha, \beta)$ is a standard diagram of $\Sigma_{p,b} \times I$;
\item[(2)] In $H_1(\Sigma)$, we have \[ [\gamma_i] = -[\alpha_i] - \sum_{j=1}^{g-p} \widetilde{Q}_{ji} [\beta_j] - d_i [\eta_i], \] where $Q$ is the intersection form of $X$ and $\widetilde{Q} = A_{\psi}^{-1} \oplus Q \oplus \langle 0 \rangle^{k-l}$.
\end{enumerate}
\end{theorem*}

\section{Several facts of relative trisection}
In this section, we see several facts related to relative trisection to prove the main result. 

We know an important fact showed by Waldhausen \cite{Wal1} that a Heegaard splitting of $S^3$ is equivalent to a standard splitting. Similarly, a splitting of connected sums of $S^1 \times S^2$ is also equivalent to a standard splitting by applying Haken's Lemma. Regarding sutured Heegaard splitting, we can show a similar uniqueness by using the following generalization of Haken's Lemma:

\begin{lemma}[Casson and Gordon \cite{CG1}]
Let $(C, C')$ be a sutured Heegaard splitting of $(M; B, B')$. Let $(S, \partial S) \subset (M, B \amalg B')$ be a disjoint union of essential 2-sphere and disks. Then, there exists a disjoint union of essential 2-spheres and disks $S^*$ in $M$ such that:
\begin{enumerate}
\item[(1)] $S^*$ is obtained from $S$ by ambient 1-surgery and isotopy;
\item[(2)] each component of $S^*$ meets $\Sigma$ in a single circle;
\item[(3)] there exist complete disk systems $D$, $D'$ for $C$, $C'$, respectively, such that $D \cap S^* = D' \cap S^* = \emptyset$.
\end{enumerate}
\end{lemma}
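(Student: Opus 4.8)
The plan is to run the classical Haken minimization argument in the sutured setting, using only that the compression bodies $C,C'$ are irreducible and that $S$ is essential; the specific topology of $M$ is not needed. First I would put $S$ transverse to $\Sigma$ and isotope $\partial S\subset B\amalg B'$ off the sutures $\partial\Sigma$, so that $S\cap\Sigma$ is a finite disjoint union of circles $\mathcal{C}$ (there are no intersection arcs, as $\partial S$ is disjoint from $\partial\Sigma$). Take as complexity the pair consisting of the number of components of $\mathcal{C}$ and, secondarily, the number of inessential components of $S$, and decrease it by isotopies and ambient $1$-surgeries (compressions) of $S$, discarding at each stage any $2$-sphere that bounds a ball and any $\partial$-parallel disk. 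Because the discarded pieces are inessential, the surviving surface remains a disjoint union of essential spheres and disks obtained from $S$ by ambient $1$-surgery and isotopy; this is conclusion (1), and it is exactly this discarding/surgery that allows $S^*$ to have more or fewer components than $S$.

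Next I would remove every circle of $\mathcal{C}$ that is inessential on $\Sigma$. Choosing such a circle $\kappa$ innermost on $\Sigma$, it bounds a subdisk $F\subset\Sigma$ whose interior is disjoint from $S$; compressing $S$ along $F$ is an ambient $1$-surgery that deletes $\kappa$ from $\mathcal{C}$ and splits off a closed component lying in one of $C,C'$. Since $C_{g,p;b}$ is irreducible, any such sphere bounds a ball and any such disk is $\partial$-parallel, so it is inessential and is discarded, strictly lowering the complexity. Iterating, I may assume that every circle of $\mathcal{C}$ is essential in $\Sigma$.

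With all intersection circles essential, each innermost disk of $S$ on either side is a genuine compressing (meridian) disk of $\Sigma$: an innermost circle bounds a disk $E\subset S$ with $\operatorname{int}E$ in, say, $C$, and $\partial E$ essential in $\Sigma$. The goal is to arrange by ambient $1$-surgeries that each component of $S$ meets $\Sigma$ in a single circle. This is the standard innermost-disk endgame: if some component still meets $\Sigma$ in at least two circles, the innermost disks on the two sides produce a compression of $S$ along a meridian disk that either yields a sphere meeting $\Sigma$ in a single circle (when one circle bounds meridian disks on \emph{both} sides) or strictly reduces $|\mathcal{C}|$, so minimality forces single-circle intersections. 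Carrying this out gives conclusion (2). I expect this to be the genuine obstacle: verifying that the two-sided innermost-disk compressions can always be chosen to strictly reduce $|\mathcal{C}|$ while keeping the surviving components essential is the delicate point, and it is here that irreducibility of $C,C'$ and essentiality of $S$ must be combined, rather than any special feature of $M$.

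Finally, for (3), once every component of $S^*$ meets $\Sigma$ in exactly one essential circle, $S^*\cap C$ consists of compressing disks (from the sphere components) together with boundary-parallel annuli (from the disk components), and similarly for $S^*\cap C'$. Cutting $C$ along the disk pieces again yields compression bodies, so I would complete the disk pieces of $S^*\cap C$ to a complete disk system for $C$ and push it off $S^*$ by a small isotopy — possible since the pieces of $S^*\cap C$ are disjointly embedded — to obtain a complete disk system $D$ with $D\cap S^*=\emptyset$; the symmetric construction in $C'$ gives $D'$ with $D'\cap S^*=\emptyset$, as required.
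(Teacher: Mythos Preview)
The paper does not supply its own proof of this lemma; it is quoted verbatim as a result of Casson and Gordon \cite{CG1} and then immediately applied in the proof of Proposition~3.2. So there is no argument in the paper to compare your proposal against.

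That said, your outline is the standard Haken--Casson--Gordon minimization scheme and is the right shape for a proof. Two places deserve more care if you intend this as a self-contained argument rather than a sketch. First, the ``innermost-disk endgame'' you flag is indeed the heart of the matter: once all circles of $S\cap\Sigma$ are essential, one must show that a component meeting $\Sigma$ in more than one circle admits a compression (on one side or the other) that strictly lowers $|S\cap\Sigma|$ without destroying essentiality; in Casson--Gordon this is handled by a careful swap/isotopy using irreducibility of the compression bodies, and your paragraph does not yet supply that step. Second, your treatment of conclusion~(3) glosses over the annulus pieces. If a disk component of $S^*$ has its boundary in $B$, then $S^*\cap C$ contains an annulus with one boundary circle essential on $\Sigma$ and the other on $B$; you assert such an annulus is $\partial$-parallel, but this requires an argument (e.g.\ that a properly embedded incompressible annulus in a compression body with boundaries on distinct components of $\partial C$ is $\partial$-parallel, together with a check that compressibility of the annulus can be absorbed into the earlier surgeries). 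Only after that can you legitimately choose a complete disk system for $C$ disjoint from all of $S^*\cap C$, not just from its disk pieces.
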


\noindent According to \cite{CG1}, if M is irreducible, and so $S$ must consist of disks only, then $S^*$ is isotopic to $S$.

\begin{proposition}
A sutured Heegaard splitting of $\Sigma_{p,b} \times I \, \sharp \, ( \sharp^{k-l} S^1 \times S^2)$ is equivalent to a standard splitting.
\end{proposition}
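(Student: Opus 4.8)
The plan is to reduce the topological complexity of $M = \Sigma_{p,b}\times I \,\sharp\,(\sharp^{k-l} S^1\times S^2)$ one step at a time, applying the generalized Haken Lemma above at each step, with the product sutured $3$-ball $D^2\times I$ — and hence Waldhausen's theorem \cite{Wal1} — as the base case. Concretely, I would induct on the number $k-l$ of $S^1\times S^2$ summands and, for $k-l=0$, on the integer $l = 2p+b-1$. Fix the given sutured Heegaard splitting $M = C_\mu\cup_\Sigma(-C_\nu)$, with $\Sigma\cong\Sigma_{g,b}$.

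\emph{Step 1: peeling off the $S^1\times S^2$ summands.} Suppose $k>l$ and let $R$ be a belt $2$-sphere of one of these summands; it is essential and non-separating. I would apply the Lemma to $S=R$: since a single $2$-sphere admits no genus-raising ambient $1$-surgery while the output $S^*$ must again be a sphere, we may take $S^*=R$ up to isotopy, meeting $\Sigma$ in a single circle $c$, with complete disk systems $D\subset C_\mu$, $D'\subset C_\nu$ disjoint from $R$. Writing $R=\delta\cup_c\delta'$ with $\delta\subset C_\mu$, $\delta'\subset C_\nu$ disks, one checks (using that compression bodies are irreducible with $H_2=0$, so $c$ cannot bound a disk on $\Sigma$) that the splitting is genuinely reducible along $R$; by the standard dichotomy it either destabilizes, lowering $g$, or exhibits $(C_\mu,C_\nu)$ as a self-connected sum of a sutured Heegaard splitting of $M'\cong\Sigma_{p,b}\times I \,\sharp\,(\sharp^{k-l-1} S^1\times S^2)$ with the genus-$1$ splitting of $S^1\times S^2$. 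The latter is standard by induction, and both a self-connected sum along a sphere (which contributes precisely one curve-pair with $\mu_i=\nu_i$) and a stabilization preserve the normal form $i(\mu_i,\nu_j)=\delta_{ij}$, $\mu_i=\nu_i$; hence $(C_\mu,C_\nu)$ is standard.

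\emph{Step 2: the irreducible case $M=\Sigma_{p,b}\times I$.} Take a properly embedded essential arc $a$ on $\Sigma_{p,b}$ and let $E=\{a\}\times I\subset M$ be the corresponding vertical disk, so that cutting $M$ along $E$ yields $\Sigma_{p',b'}\times I$ with $2p'+b'-1=l-1$. Since $M$ is irreducible and $E$ is a disk, the remark following the Lemma lets us isotope $E$ so that it meets $\Sigma$ in a single circle and is disjoint from complete disk systems of $C_\mu$ and $C_\nu$. Then cutting $M$ along $E$ restricts $(C_\mu,C_\nu)$ to a sutured Heegaard splitting of $\Sigma_{p',b'}\times I$, standard by induction; re-gluing along $E$ reconstructs $M$ from a standard diagram and, after routine handle slides, produces a standard diagram for $(C_\mu,C_\nu)$. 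Peeling off the arcs of a complete arc system of $\Sigma_{p,b}$ in turn reduces to $\Sigma_{0,1}\times I=D^2\times I$; for this base case, capping the two boundary disks turns a sutured Heegaard splitting into a Heegaard splitting of $S^3$, so Waldhausen's theorem forces it to be a stabilization of the genus-$0$ splitting, which is the standard one.

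The main obstacle I anticipate is not in quoting the two external inputs (the Lemma and Waldhausen's theorem) but in the bookkeeping that glues the reduction steps together: checking that the intersection circle $c$ can be arranged essential on $\Sigma$ so the reducibility dichotomy applies cleanly; that cutting along the vertical disk $E$ interacts with the complete disk systems so the induced object is genuinely a sutured Heegaard splitting of a product $\Sigma_{p',b'}\times I$; and that after re-gluing, the genus count and the dual-/parallel-curve pairing of the diagram land exactly in the normal form demanded by the definition of a standard splitting.
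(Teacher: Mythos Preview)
Your overall strategy---peel off the $S^1\times S^2$ summands via Haken-type reduction, then handle $\Sigma_{p,b}\times I$ by cutting along product disks, with Waldhausen as the endgame---is sound in spirit, but there is one genuine gap in Step~2. The Casson--Gordon Lemma as stated in the paper requires $(S,\partial S)\subset(M,\,B\amalg B')$: the boundary of each disk must lie entirely on the \emph{horizontal} boundary. Your vertical disk $E=a\times I$ has
\[
\partial E \;=\; (a\times\{0\})\;\cup\;(\partial a\times I)\;\cup\;(a\times\{1\}),
\]
and the middle piece $\partial a\times I$ lives on the suture $\partial\Sigma_{p,b}\times I$, not on $B\amalg B'$. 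Hence the Lemma (and the irreducibility remark following it, which you invoke) does not apply to $E$ as written. This is not among the bookkeeping worries you flag at the end; it is an applicability issue that must be addressed before the induction on $l$ can start.

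The paper's proof circumvents exactly this point, and that is the main substantive difference between the two arguments. Rather than inducting on $l$ one arc at a time, the paper takes a full arc system $a=\{a_1,\dots,a_l\}$ cutting $\Sigma_{p,b}$ to a disk and attaches $2$-dimensional $1$-handles ${}^2h^1$ along $\partial a$ so that each $a_i$ closes up to a curve $a_i\cup c_i$; after attaching further $3$-dimensional $2$-handles along the corresponding curves on the other side, the relevant disks $D_i$ have boundary lying entirely in the horizontal boundary, so Casson--Gordon applies directly and \emph{once} to the whole collection. The output produces complete disk systems for both compression bodies disjoint from a family of arcs $e\subset\Sigma$, so that $\Sigma$ decomposes as $\Sigma_{g-p}\cup\Sigma_{p,b}$ with $(\Sigma_{g-p};\partial D,\partial D')$ a genuine closed Heegaard diagram of $S^3$; Waldhausen then finishes in one stroke, with no induction needed. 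Your scheme can be repaired either by this same $1$-handle trick (applied one arc at a time) or by citing a sutured/product-disk version of Casson--Gordon, but you need to say which. A minor separate point in Step~1: the claim ``$S^*=R$ up to isotopy'' is stronger than what the Lemma provides---$S^*$ is only obtained from $R$ by ambient $1$-surgery and isotopy---but any essential sphere meeting $\Sigma$ in a single circle suffices for the reduction, so this does not affect the argument.
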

\begin{proof}
By applying a similar argument to the closed case, it is sufficient to show that a genus $g$ splitting of $\Sigma_{p,b} \times I$ is standard. 

Let $(C, C')$ be a genus $g$ sutured Heegaard splitting of $(\Sigma_{p,b} \times I; \Sigma_{p,b}, \Sigma_{p,b})$, and let $a$ be a collection of $l$ arcs which cuts $B$ into a disk. When $D$ is a regular neighborhood of $\partial a$ in $\partial B$, we can attach $l$ 2-dimensional 1-handles $^{2}h^1$ whose attaching regions are $D$ such that $^{2}h^1\times I$ are 3-dimensional 1-handles $h^1$ whose attaching regions are $D \times I$. Therefore, $(C \cup {}^2h^1 \times [0, 1/2], C' \cup {}^2h^1 \times [1/2, 1])$ is a sutured splitting of $((\Sigma_{p,b} \cup {}^2h^1) \times I; \Sigma_{p,b} \cup {}^2h^1, \Sigma_{p,b} \cup {}^2h^1)$. We call it $(\widetilde{C}, \widetilde{C}')$. 

Let $c_i$ be a core of the handle ${}^2h^1_i$. A curve $(a_i \cup c_i) \times I \cap \widetilde{B}'$ can be described as $a'_i \cup c_i$ for some arc $a'_i$ since $(a_i \cup c_i) \times I \cap h^1 = c_i \times I$. Since a collection $\{a_i \cup c_i \}$ is a cut system on $\widetilde{B}$, $\{a'_i \cup c_i \}$ is likewise on $\widetilde{B}'$. Attaching additional 3-dimensional 2-handles $h^2$ along it, we obtain a compression body $\widetilde{C}''$ from $\Sigma \cup {}^2h^1$ to $\widetilde{B}'_{a \cup c}$. Since $(h^1_i, h^2_i)$ is a canceling pair, $(\widetilde{C}, \widetilde{C}'')$ is a sutured splitting of $\Sigma_{p,b} \times I$.

 Let $D$ be a collection of disks obtained by capping $(a_i \cup c_i) \times I$ with a core disk of $h^2$. Applying the Lemma 3.1 for $D$, we obtain complete disk systems $\widetilde{D}$ and $\widetilde{D}''$ for $\widetilde{C}$ and $\widetilde{C}''$, respectively. $D_i \cap (\Sigma \cup {}^2h^1)$ is a single circle, and it splits into two arcs $c_i$ and $e_i$ in $\Sigma$ since $D \cap \widetilde{D} = \emptyset$. Therefore, $\widetilde{D}$ can be regarded as a complete disk system for $C$ satisfying $\partial \widetilde{D} \cap e = \emptyset$. A circle $e_i \cup c_i$ is capped with $D_i \cap \widetilde{C}''$. Since the collection of these disks is disjoint from $\widetilde{D}''$ and $\{e_i \cup c_i\}$ can be regarded as a cut system on $\Sigma$, we can choose another complete disk system for $\widetilde{C}''$ which has a subcollection $\{D_i \cap \widetilde{C}''\}$. When a disk in this system except $\{D_i \cap \widetilde{C}''\}$ intersects a co-core of ${}^2h^1_i$, we slide it over $D_i \cap \widetilde{C}''$ so that the disk can be disjoint from the co-core. Therefore, we can obtain a disk system $D'$ for $C'$ by excepting $\{D_i \cap \widetilde{C}''\}$ from the system for $\widetilde{C}''$ satisfying $\partial D' \cap e = \emptyset$.

Since $\partial D \cap e = \partial D' \cap e = \emptyset$, $\Sigma$ can be split into $\Sigma_{g-p}$ and $\Sigma_{p,b}$ such that the former includes $\partial D$, $\partial D'$ and the latter does $e$. It is clear that $(\Sigma_{g-p}; \partial D, \partial D')$ is a Heegaard diagram of $S^3$. By Waldhausen' theorem, $(\Sigma; \partial D, \partial D')$ can be standard.
\end{proof}

By this proposition, the definition of relative trisection in this paper is equivalent to what is given in \cite{CGPC1}. Therefore, the following corollary holds in regard to relative trisection diagram.

\begin{corollary}
Let $(\Sigma; \alpha^1, \alpha^2, \alpha^3)$ be a $(g, (k_1, k_2, k_3); p, b)$-relative trisection diagram. $(\Sigma; \alpha^i, \alpha^{i+1})$ is diffeomorphism and handle slide equivalent to a diagram $(\Sigma; \delta^{k_i}, \epsilon^{k_i})$, where $(\Sigma; \delta^k, \epsilon^k)$ is the following diagram. 
\begin{figure}[H]
\centering
\includegraphics[keepaspectratio,scale=0.8,pagebox=cropbox]{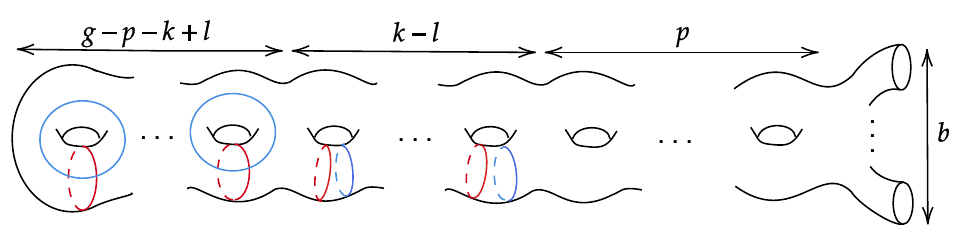}
\caption{A standard diagram $(\Sigma; \delta^{k}, \epsilon^{k})$, where the red curves are $\delta^k$, and blue curves are $\epsilon^k$.  }  
\end{figure}
\end{corollary}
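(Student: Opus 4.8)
The plan is to deduce this from Proposition 3.2 --- indeed from its proof --- together with the fact, recalled in Section 2, that two sutured Heegaard splittings are equivalent exactly when their diagrams are diffeomorphism and handle slide equivalent. Write $N_i := \Sigma_{p,b} \times I \, \sharp \, (\sharp^{k_i-l} S^1 \times S^2)$. By the definition of a relative trisection diagram, $(\Sigma; \alpha^{i+1}, \alpha^i)$ is a sutured Heegaard diagram of $N_i$; since $N_i$ is amphichiral (reverse the $I$ factor of $\Sigma_{p,b} \times I$ and reflect each $S^2$ factor), interchanging the two cut systems only reverses the orientation of the splitting, so the tuple $(\Sigma; \alpha^i, \alpha^{i+1})$ appearing in the statement is, after an orientation-reversing self-diffeomorphism of $\Sigma$, again a sutured Heegaard diagram of $N_i$. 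Proposition 3.2 then makes the associated splitting equivalent to a standard one, hence $(\Sigma; \alpha^i, \alpha^{i+1})$ diffeomorphism and handle slide equivalent to a diagram $(\Sigma; \mu, \nu)$ of standard form, i.e.\ with $i(\mu_j, \nu_{j'}) = \delta_{jj'}$ for $j, j' \le g - p - k_i + l$ and $\mu_j = \nu_j$ for $j > g - p - k_i + l$.

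It then remains to identify such a standard-form diagram with the pictured $(\Sigma; \delta^{k_i}, \epsilon^{k_i})$ up to diffeomorphism and handle slides. The most efficient route is to read this off the proof of Proposition 3.2 itself: applied to $(\Sigma; \alpha^i, \alpha^{i+1})$, after first splitting off the $\sharp^{k_i-l} S^1 \times S^2$ summands via Lemma 3.1 --- which produces $k_i - l$ parallel pairs --- that proof yields a diagram in which the remaining $g - p - k_i + l$ curves $\partial D$, $\partial D'$ lie in a genus $g - p - k_i + l$ subsurface and realise the standard Heegaard diagram of $S^3$ there (this last normalisation being exactly Waldhausen's theorem), while a complementary $\Sigma_{p,b}$ carries no curves at all. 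This configuration is precisely $(\Sigma; \delta^{k_i}, \epsilon^{k_i})$ up to a diffeomorphism of $(\Sigma, \partial\Sigma)$ repositioning the subsurfaces and relabelling the curves, and matching the two is a matter of inspection. Alternatively one can argue in the abstract that any two standard-form diagrams with the same parameters are diffeomorphism and handle slide equivalent: the change-of-coordinates principle carries $\mu$, together with its distinguished sub-collection $\{\mu_j : j > g - p - k_i + l\}$ of parallel curves, onto the corresponding data of $(\Sigma; \delta^{k_i}, \epsilon^{k_i})$; one then observes that each dual curve $\nu_j$ ($j \le g - p - k_i + l$) meets $\mu_j$ once and --- being disjoint from the other members of the cut system $\nu$, and in particular from $\mu_{j'} = \nu_{j'}$ for $j' > g - p - k_i + l$ --- is disjoint from every $\mu$-curve but $\mu_j$, so that the dual curves constitute a genus $g - p - k_i + l$ Heegaard diagram of $S^3$, which Waldhausen's theorem standardises through handle slides within $\nu$.

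I expect the real work to lie in this identification step: making precise that ``standard form'' pins the diagram down up to a diffeomorphism of the pair $(\Sigma, \partial\Sigma)$ and handle slides confined to $\mu$ and to $\nu$ separately, rather than merely up to an abstract equivalence of splittings. Following the proof of Proposition 3.2 is cleaner than re-deriving a normal form, but it requires carefully unwinding two points that that proof only sketches, namely the reduction of the general-$k$ case to the $\Sigma_{p,b} \times I$ case using Lemma 3.1, and the verification that the Waldhausen normalisation of the $S^3$ part can be carried out without disturbing the parallel curves or the curve-free $\Sigma_{p,b}$. A subsidiary nuisance is the orientation-and-ordering discrepancy between the $(\Sigma; \alpha^i, \alpha^{i+1})$ of the statement and the $(\Sigma; \alpha^{i+1}, \alpha^i)$ of the definition, absorbed above via the amphichirality of $N_i$ and the evident symmetry of the pictured diagram.
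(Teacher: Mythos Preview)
Your approach is essentially the paper's: the paper states the corollary without proof, simply recording (in the sentence immediately preceding the statement) that Proposition~3.2 makes the present definition of relative trisection agree with that of \cite{CGPC1}, whence the pictured standard form follows. Your proposal is a careful unpacking of exactly this implication---including the ordering/orientation discrepancy and the identification of the abstract ``standard form'' with the specific pictured diagram---so there is nothing to correct, only to note that the paper regards all of this as routine and omits it.
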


The monodromy of the open book decomposition derived from a relative trisection can be described by the algorithm of Castro, Gay and Pinz\'on-Caicedo. For a cut system of curves $\alpha$ and a collection of arcs $a$ in $\Sigma$, we also call $(\alpha, a)$ a cut system if $a$ is disjoint from $\alpha$ and cuts $\Sigma_{\alpha}$ into a disk (when $\alpha = \emptyset$, we simply describe it as $a$).  A cut system $(\alpha, a)$ is handle slide and diffeomorphism equivalent to another $(\alpha', a')$ if $\alpha'$ is equivalent to $\alpha$ and $a'$ is obtained by sliding $a$ over $\alpha$.

\begin{proposition}[Theorem 5 in \cite{CGPC1}]
Let $a$ be a collection of $l$ arcs in $\Sigma$ such that $(\alpha^1, a)$ is a cut system. The monodromy $\psi : \Sigma_1 \to \Sigma_1$ induced by a relative trisection $\mathcal{T}_X$ which has a diagram $(\Sigma; \alpha^1, \alpha^2, \alpha^3)$ is described by the following algorithm. Putting $(\tilde{\alpha}^1, a^1) = (\alpha^1, a)$, we get cut systems $(\tilde{\alpha}^i, a^i)$ inductively. When there exists $(\tilde{\alpha}^i, a^i)$, we obtain $(\tilde{\alpha}^{i+1}, a^{i+1})$ such that $\tilde{\alpha}^{i+1}$ is equivalent to $\alpha^{i+1}$ and $(\tilde{\alpha}^i, a^{i+1})$ is equivalent to $(\tilde{\alpha}^i, a^i)$. Then, the pair $(\tilde{\alpha}^4, a^4)$ is equivalent to $(\alpha^1, a')$ for some collection of arcs $a'$. The monodromy $\psi$ is the unique (up to isotopy) map such that $\psi(a) = a'$.
\end{proposition}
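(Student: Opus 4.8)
Since Proposition~3.2 shows that the relative trisections considered here coincide with those of \cite{CGPC1}, and hence that the two constructions of the induced open book on $\partial X$ agree, the statement is literally Theorem~5 of \cite{CGPC1}; I would record this as the shortest justification. To exhibit the mechanism, the plan is to realize $\psi$ as a composition of three ``transition'' diffeomorphisms, one for each sector of the trisection, and to transport the arc system $a$ through them.

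First I would set up the factorization. The relative trisection $\mathcal{T}_X$ makes $\partial X$ an open book with page $\Sigma_1\cong\Sigma_{p,b}$, and its surface-bundle part is assembled from the three product regions $Z_i:=X_i\cap\partial X\cong\Sigma_{p,b}\times I$, each $Z_i$ running from the page $\Sigma_i$ to the page $\Sigma_{i+1}$ (indices cyclic, $\Sigma_4=\Sigma_1$). So $\psi=\Psi_3\circ\Psi_2\circ\Psi_1$, where $\Psi_i\colon\Sigma_i\to\Sigma_{i+1}$ is the comparison of the two ends of $Z_i$ furnished by its product structure, extended inward through the compression bodies $C_{\alpha^i}$ and $C_{\alpha^{i+1}}$ along whose ends $\Sigma_i$ and $\Sigma_{i+1}$ are glued to $Z_i$. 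I would make this precise using the construction of the induced open book in \cite{CGPC1}.

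Next I would compute $\Psi_i$ from the diagram and match it with the inductive step of the algorithm. Given a cut system $(\tilde\alpha^i,a^i)$ on $\Sigma$, the arcs $a^i$ are disjoint from $\tilde\alpha^i$ and so descend to arcs $\bar a^i\subset\Sigma_i$ cutting $\Sigma_i$ into a disk, with $a^i$ carried to $\bar a^i$ through $C_{\alpha^i}$ along the annuli $a^i\times I$. Pushing $\bar a^i$ across $Z_i$ by the product structure and lifting the result back through $C_{\alpha^{i+1}}$ to $\Sigma$, I expect to obtain an arc system $a^{i+1}$ with: $(\tilde\alpha^{i+1},a^{i+1})$ a cut system for some representative $\tilde\alpha^{i+1}$ of the handle-slide class of $\alpha^{i+1}$, and $(\tilde\alpha^i,a^{i+1})$ obtained from $(\tilde\alpha^i,a^i)$ by slides of arcs over $\tilde\alpha^i$ only; then $\Psi_i(\bar a^i)=\bar a^{i+1}$. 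The slides over $\tilde\alpha^i$ and the change of representative are what is needed to resolve the intersections that appear when $a^i\times I$ is pushed across the co-cores of the $2$-handles of $C_{\alpha^{i+1}}$ --- the same manipulation of complete disk systems and canceling handle pairs used in the proof of Proposition~3.2, now carried out relative to the arcs. Composing the three steps returns the cut system to the class of $\alpha^1$, that is, $(\tilde\alpha^4,a^4)=(\alpha^1,a')$ with $\psi(\bar a)=\bar a'$.

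Finally I would conclude with the standard fact that an orientation-preserving self-diffeomorphism of $\Sigma_{p,b}$ fixing $\partial\Sigma_{p,b}$ pointwise is determined up to isotopy rel $\partial$ by its action, rel endpoints, on an arc system cutting the surface into a disk. Since $\psi$ fixes $\partial\Sigma_1$ and $\bar a$ cuts $\Sigma_1$ into a disk, $\psi$ is the unique self-diffeomorphism of $\Sigma_1$ carrying $\bar a$ to $\bar a'$, as claimed. The main obstacle is the third step: making rigorous the extension of the product identification $\Psi_i$ through the nontrivial compression bodies and checking precisely that the induced change of arc system uses slides over $\tilde\alpha^i$ only (with, possibly, a change of handle-slide representative for $\alpha^{i+1}$) and not slides over $\tilde\alpha^{i+1}$; the remaining steps are either routine or follow from \cite{CGPC1} via Proposition~3.2.
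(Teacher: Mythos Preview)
The paper does not supply a proof of this proposition at all: it is stated with the attribution ``Theorem~5 in \cite{CGPC1}'' and used as a black box, so your ``shortest justification'' --- citing \cite{CGPC1} directly, after noting via Proposition~3.2 that the definitions agree --- is exactly what the paper does. Your further sketch of the mechanism (factoring $\psi$ through the three product regions and tracking arc systems through the compression bodies) is additional content not present in the paper; it is a reasonable outline of the argument in \cite{CGPC1}, though, as you yourself flag, making the lifting step through $C_{\alpha^{i+1}}$ rigorous is the nontrivial part and would require care beyond what you have written.
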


Moussard and Schirmer consider a homological version of this algorithm in \cite{MS1}. We see this to prove the main theorem in this paper. Let $L_i$ be the subgroup of $H_1(\Sigma)$ generated by the homology classes of the curves $\alpha^i$. Since $(\Sigma; \alpha^i, \alpha^{i+1})$ can be standard, we can obtain collections $\kappa^i$ and $\lambda^{i+1}$ which consist of simple closed curves on $\Sigma$ that form bases of $\frac{L_i}{L_i \cap L_{i+1}}$ and $\frac{L_{i+1}}{L_i \cap L_{i+1}}$, respectively. For a collection of curves and proper arcs $\mu$ and a collection of curves $\nu$ on $\Sigma$, we define the matrix $\mu \cdot \nu$ whose element $(\mu \cdot \nu)_{ij}$ is $\langle [\mu_i], [\nu_j] \rangle_{\Sigma}$. We can define $\nu \cdot \mu$ similarly. 

\begin{proposition}
In the algorithm of Proposition 3.4, we can choose the collections of arcs $a^i$ satisfying the following formula;
\[ [a^{i+1}_j -a^i_j] = \sum R^i_{jn} [\kappa^i_n], \]
where $R^i = -(a^i \cdot \lambda^{i+1}) (\kappa^i \cdot \lambda^{i+1})^{-1}$.
\end{proposition}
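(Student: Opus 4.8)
The plan is to separate the statement into a piece of homological algebra that pins down the $\kappa^i$-component of $[a^{i+1}_j - a^i_j]$, and a geometric step that kills the remaining component in $L_i \cap L_{i+1}$. First I would record the linear algebra. Since each $\alpha^i$ is a cut system its curves are pairwise disjoint, so $L_i$ is isotropic for the intersection pairing $\langle - , - \rangle_{\Sigma}$; the same holds for $L_{i+1}$, whence $L_i \cap L_{i+1}$ is isotropic and $\langle L_i \cap L_{i+1}, L_i \rangle_{\Sigma} = \langle L_i \cap L_{i+1}, L_{i+1} \rangle_{\Sigma} = 0$. Hence $\langle - , - \rangle_{\Sigma}$ descends to a well-defined form $\bar{\phi} \colon \tfrac{L_i}{L_i \cap L_{i+1}} \times \tfrac{L_{i+1}}{L_i \cap L_{i+1}} \to \mathbb{Z}$, and $(\kappa^i \cdot \lambda^{i+1})$ is a coordinate matrix for $\bar{\phi}$. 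Using Corollary 3.3 to replace $(\Sigma; \alpha^i, \alpha^{i+1})$ by the explicit standard diagram $(\Sigma; \delta^{k_i}, \epsilon^{k_i})$, I would check that $L_i \cap L_{i+1}$ is spanned by the classes of the curves common to $\delta^{k_i}$ and $\epsilon^{k_i}$ and that, on the images of the remaining (``dual-pair'') curves, $\bar{\phi}$ is represented by a matrix with determinant $\pm 1$; hence $(\kappa^i \cdot \lambda^{i+1})$ is invertible over $\mathbb{Z}$ and $R^i$ is a well-defined integral matrix.

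Next I would extract the two homological constraints that the algorithm of Proposition 3.4 imposes on an admissible $a^{i+1}$. From the equivalence of $(\tilde{\alpha}^i, a^{i+1})$ with $(\tilde{\alpha}^i, a^i)$ one gets that $a^{i+1}$ is a band sum of $a^i$ with copies of curves of $\tilde{\alpha}^i$, so $[a^{i+1}_j] - [a^i_j] \in L_i$; from $(\tilde{\alpha}^{i+1}, a^{i+1})$ being a cut system one gets $a^{i+1}$ disjoint from $\tilde{\alpha}^{i+1}$, so $[a^{i+1}_j]$ is orthogonal to $L_{i+1}$, in particular to the lifts $[\lambda^{i+1}_m]$. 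Writing $[a^{i+1}_j] - [a^i_j] = x_j + w_j$ with $x_j$ in the span of the $[\kappa^i_n]$ and $w_j \in L_i \cap L_{i+1}$, and pairing with $[\lambda^{i+1}_m]$, the term $\langle w_j, [\lambda^{i+1}_m] \rangle_{\Sigma}$ vanishes by isotropy of $L_{i+1}$, so the orthogonality relation becomes the matrix identity $(a^i \cdot \lambda^{i+1}) + R^i (\kappa^i \cdot \lambda^{i+1}) = 0$, where $R^i$ is the coordinate matrix of $(x_j)_j$ in the basis $\{[\kappa^i_n]\}$. Inverting $(\kappa^i \cdot \lambda^{i+1})$ gives $R^i = -(a^i \cdot \lambda^{i+1})(\kappa^i \cdot \lambda^{i+1})^{-1}$. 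Thus, for any admissible choices, the $\tfrac{L_i}{L_i \cap L_{i+1}}$-part of $[a^{i+1}_j - a^i_j]$ is forced and already equals $\sum_n R^i_{jn} [\kappa^i_n]$; only $w_j$ remains free.

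Finally I would arrange $w_j = 0$. Using the standard form again, I would choose representatives so that $\tilde{\alpha}^i$ and $\tilde{\alpha}^{i+1}$ share a sub-collection $\zeta$ of disjoint simple closed curves whose classes span $L_i \cap L_{i+1}$ (the curves common to $\delta^{k_i}$ and $\epsilon^{k_i}$). Band-summing each $a^{i+1}_j$ with suitable copies of the curves of $\zeta$ then changes $[a^{i+1}_j]$ by any prescribed element of $L_i \cap L_{i+1}$ while keeping the collection a band sum of $a^i$ with curves of $\tilde{\alpha}^i$ (as $\zeta \subset \tilde{\alpha}^i$) and keeping it disjoint from $\tilde{\alpha}^{i+1}$ (as $\zeta \subset \tilde{\alpha}^{i+1}$), hence still admissible in the algorithm; choosing the bands to add $-w_j$ completes the step, which I would then run inductively for $i = 1, 2, 3$. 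I expect the main obstacle to be exactly this last step: at the transition $i \to i+1$ the pair $(\tilde{\alpha}^i, a^i)$ is already fixed by the previous transition, so $\tilde{\alpha}^i$ may no longer be handle-slid, and one must already arrange at the earlier stage that a common sub-collection realizing $L_i \cap L_{i+1}$ survives — that is, one must reconcile the standard-form choices for the three consecutive pairs occurring along the algorithm.
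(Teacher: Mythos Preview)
Your approach is the same as the paper's: write $[a^{i+1}_j - a^i_j] \in L_i$ as a $\kappa^i$-part plus a part in $L_i\cap L_{i+1}$, determine the $\kappa^i$-part by pairing with the $\lambda^{i+1}$, and then kill the $L_i\cap L_{i+1}$-part by further slides of $a^{i+1}$. The linear-algebra half (your first two paragraphs) is essentially identical to what the paper does; the paper simply writes $[a^{i+1}_j - a^i_j] = \sum S^i_{jk}[\kappa^i_k] + \sum T^i_{jm}[\mu^i_m]$ with $\{[\mu^i_m]\}$ a basis of $L_i\cap L_{i+1}$ and reads off $S^i = R^i$ from the pairing with $\lambda^{i+1}$.

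The obstacle you flag at the end is genuine for the way you set up the last step, and the paper dissolves it rather than reconciling the three standard forms. Instead of a literal common sub-collection $\zeta \subset \tilde{\alpha}^i \cap \tilde{\alpha}^{i+1}$, the paper uses curves $\mu^i$ on $\Sigma$ whose classes span $L_i\cap L_{i+1}$ and which are obtained from $\tilde{\alpha}^i$ by handle slides \emph{and also} from $\tilde{\alpha}^{i+1}$ by handle slides; such $\mu^i$ exist by the standard form (Corollary~3.3) without moving either $\tilde{\alpha}^i$ or $\tilde{\alpha}^{i+1}$. Sliding $a^{i+1}$ over $\mu^i$ then (i) keeps $(\tilde{\alpha}^i,a^{i+1})$ in the handle-slide equivalence class of $(\tilde{\alpha}^i,a^i)$, since a slide over a curve that is itself a slide of $\tilde{\alpha}^i$-curves is a composite of slides over $\tilde{\alpha}^i$; and (ii) amounts to an isotopy of the arc in $\Sigma_{i+1}$, since $\mu^i$ is also obtained by sliding $\tilde{\alpha}^{i+1}$, so disjointness from $\tilde{\alpha}^{i+1}$ can be maintained. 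This kills the $T^i$ (your $w_j$) with no pre-arrangement of $\tilde{\alpha}^i$, so the inductive step goes through uniformly and your compatibility worry disappears. In short: replace ``$\zeta \subset \tilde{\alpha}^i \cap \tilde{\alpha}^{i+1}$'' by ``$\mu^i$ slide-equivalent to sub-collections of both'' and your proof is the paper's proof.
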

\begin{proof}
Let $\mu^i$ be a collection of curves on $\Sigma$ such that $\{[\kappa^i_j], [\mu^i_m]\}$ and $\{[\lambda^{i+1}_i], [\mu^i_m]\}$ form a basis of $L_i$ and $L_{i+1}$, respectively. From the algorithm to obtain $a^{i+1}$, we can represent $[a^{i+1}_j - a^i_j]$ in $H_1(\Sigma, \partial \Sigma)$ by using matrices $S^i$ and $T^i$ as follows;
\[ [a^{i+1}_j -a^i_j] = \sum S^i_{jk} [\kappa^i_k] + \sum T^i_{jm} [\mu^i_m]. \]
Since $a^{i+1}$ and $\mu^i$ are disjoint from $\tilde{\alpha}^{i+1}$, and so $\lambda^{i+1}$, by multiplying them from the right, we obtain $- (a^i \cdot \lambda^{i+1}) = S^i (\kappa^i \cdot \lambda^{i+1})$. Therefore, we have $S^i = R^i$. If there exist nonzero components of $T^i$, we slide $a^{i+1}$ over $\mu^i$ several times. This sliding can be regarded as sliding of $a^i$ over $\tilde{\alpha}^i$ since $\mu^i$ is obtained by sliding of $\tilde{\alpha}^i$. Moreover, since $\mu^i$ is also obtained by sliding of $\tilde{\alpha}^{i+1}$, the sliding over $\mu^i$ is equivalent to isotopies of arcs in $\Sigma_{i+1}$, so the disjointness between $a^{i+1}$ and $\tilde{\alpha}^{i+1}$ is kept. Therefore, we can choose $a^{i+1}$ satisfying $T^i = 0$.
\end{proof}

We also review the calculation of the homology by using trisection diagram. Let $L^{\partial}_i$ be a subgroup of $H_1(\Sigma, \partial \Sigma)$ generated by the homology classes of curves and arcs in a cut system $(\alpha^i, a^i)$ for $\Sigma$.

\begin{proposition}[Theorem 1 in \cite{Tan1}]
The homology of $X$ can be obtained from the following chain complex: \vspace{-1ex}
{\small \begin{figure}[H]
\begin{tikzcd}
            0 \ar{r} &[-1em] (L_1 \cap L_3) \oplus (L_2 \cap L_3) \ar{r}{\pi} &[-0.25em] L_3 \ar{r}{\rho} &[-0.25em] \Hom(L_1^{\partial} \cap L_2^{\partial} , \mathbb{Z}) \ar{r}{0} &[-0.5em] \mathbb{Z} \ar{r} &[-1em] 0,
\end{tikzcd}
\end{figure}} \vspace{-2ex}
\noindent
where $\pi (x,y) = x+y,  \rho(x)(y) = \langle y , x \rangle_{\Sigma}$ and the $0$ at the left end is the 4-chain complex.
\end{proposition}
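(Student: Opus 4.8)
The plan is to compute $H_*(X)$ from the cover $X = X_1\cup X_2\cup X_3$, whose nerve is a $2$-simplex, by means of the Mayer--Vietoris (nerve) spectral sequence, after thickening the closed strata to an open cover in the standard way. First I would record the homology of the strata. Each $X_i\cong\natural^{k_i}S^1\times D^3$ has $H_1(X_i)=\mathbb{Z}^{k_i}$ and no higher homology; each pairwise intersection $X_{i-1}\cap X_i\cong C_{\alpha^i}$ is a relative compression body with $H_2=0$, and the inclusion $\Sigma\hookrightarrow C_{\alpha^i}$ induces the quotient $H_1(\Sigma)\twoheadrightarrow H_1(\Sigma)/L_i$ with kernel exactly $L_i=\langle[\alpha^i]\rangle$; and the triple intersection is $\Sigma=\Sigma_{g,b}$ with $H_2(\Sigma)=0$. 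Since every stratum has homology concentrated in degrees $0$ and $1$, the $E^1$-page has only two nonzero rows: the $q=0$ row computes the homology of the contractible nerve and contributes only $H_0(X)=\mathbb{Z}$, so the whole content sits in the $q=1$ row together with the single higher differential linking the two rows. Already here one reads off $E^2_{2,1}=\ker\bigl(H_1(\Sigma)\to\bigoplus_m H_1(\Sigma)/L_m\bigr)=L_1\cap L_2\cap L_3$, which survives to $E^\infty$ and gives $H_3(X)=L_1\cap L_2\cap L_3$.

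The spectral sequence thus produces the correct groups, but not in the packaged form of the statement; to obtain that form I would reorganize the computation around the distinguished handlebody $X_3$ using the interface $3$-manifolds $M_i=C_{\alpha^{i+1}}\cup_\Sigma -C_{\alpha^i}$. By condition (v) and Proposition 3.2 each $M_i$ is a standard sutured splitting of $\Sigma_{p,b}\times I\,\sharp\,(\sharp^{k_i-l}S^1\times S^2)$, so a Mayer--Vietoris computation for the two compression bodies glued along $\Sigma$ gives the explicit diagram homology $H_2(M_i)\cong L_i\cap L_{i+1}$ and $H_1(M_i)\cong H_1(\Sigma)/(L_i+L_{i+1})$. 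The two interfaces $M_2$ and $M_3$ that contain the $\gamma$-compression body $C_3=X_2\cap X_3$ then contribute their second homology $H_2(M_3)\oplus H_2(M_2)=(L_1\cap L_3)\oplus(L_2\cap L_3)$ in degree $3$, the span $L_3=\langle[\gamma]\rangle$ of the $\gamma$-curves appears in degree $2$, and the boundary map, being the alternating sum of the inclusions $L_i\cap L_3\hookrightarrow L_3$, is exactly $\pi(x,y)=x+y$; its kernel $L_1\cap L_2\cap L_3$ reconciles this presentation with the spectral-sequence value of $H_3(X)$.

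The genuinely delicate point, and the main obstacle, is to produce the degree-$1$ group in the dual form $\Hom(L_1^\partial\cap L_2^\partial,\mathbb{Z})$ with differential $\rho(x)(y)=\langle y,x\rangle_\Sigma$. This is where the proper arcs and the relative classes $L_i^\partial\subset H_1(\Sigma,\partial\Sigma)$ must enter, precisely because $\partial X\neq\emptyset$ and the strata are cobordisms rel $\partial$. I would obtain it by applying Poincar\'e--Lefschetz duality to the pair $(X,\partial X)$ to trade the remaining homology of the boundary-adjacent strata for a cohomology, hence $\Hom$, group, combined with Lefschetz duality on $\Sigma$ itself, under which the pairing $H_1(\Sigma,\partial\Sigma)\times H_1(\Sigma)\to\mathbb{Z}$, $(y,x)\mapsto\langle y,x\rangle_\Sigma$, is unimodular on free parts. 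Since the cut systems $(\alpha^i,a^i)$ span $L_i^\partial$, and the distinguished index $3$ is paired against the remaining two systems $\alpha,\beta$, restricting this pairing to the classes common to the $\alpha$- and $\beta$-systems realizes $\rho$ on $L_1^\partial\cap L_2^\partial$. Verifying that the surviving higher differential of the spectral sequence, transported through these two dualities, is literally $\rho$ and not a twist of it is the technical heart of the argument, with no counterpart in the closed case; it is here that the bookkeeping between closed curves ($L_i$) and arcs ($L_i^\partial$) has to be done with care.

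Finally I would pin down the two ends. The left-hand $0$ (the $4$-chain group) is forced by $\partial X\neq\emptyset$, so $H_4(X)=0$; and the last map $\Hom(L_1^\partial\cap L_2^\partial,\mathbb{Z})\to\mathbb{Z}$ vanishes for degree reasons, reflecting that the $q=0$ row contributes nothing beyond $H_0(X)=\mathbb{Z}$. Reading off the complex then yields $H_3(X)=\ker\pi$, $H_2(X)=\ker\rho/\im\pi$, $H_1(X)=\coker\rho$, and $H_0(X)=\mathbb{Z}$, as claimed. Once the duality identification of the degree-$1$ term in the previous step is secured, everything else is bookkeeping already organized by the spectral sequence.
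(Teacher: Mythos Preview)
The paper does not prove this proposition: it is quoted verbatim as Theorem~1 of the author's earlier paper \cite{Tan1} and used as a black box throughout Sections~3 and~4. There is therefore no in-paper argument to compare your proposal against.

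As for the proposal itself, the Mayer--Vietoris spectral sequence for the triple cover is the standard framework for results of this type (it is how \cite{FKSZ1} handles the closed case, and almost certainly how \cite{Tan1} proceeds), and your computations in the $q=0$ and $q=1$ rows, in particular $H_3(X)\cong L_1\cap L_2\cap L_3=\ker\pi$ and $H_0(X)=\mathbb{Z}$, are correct. Your ``reorganization around $X_3$'' is also the right move: the displayed complex is not a raw page of the spectral sequence but the complex one obtains after identifying the degree-$3$, $2$, $1$ terms with $H_2(M_3)\oplus H_2(M_2)$, the $2$-handle lattice $L_3$, and $H_1(X_1)$ respectively, exactly as in the closed case where the degree-$1$ term is $H_1(\Sigma)/(L_1+L_2)\cong H_1(X_1)$.

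The one genuine gap is the step you yourself flag. Invoking Poincar\'e--Lefschetz duality on the pair $(X,\partial X)$ is not the right tool here and will not produce the identification you need: the group $\Hom(L_1^\partial\cap L_2^\partial,\mathbb{Z})$ is a stand-in for $H_1(X_1)$, and the identification is \emph{local} to the $1$-handlebody $X_1$ and its boundary, not global to $X$. Concretely, one must show that the map $H_1(\Sigma)\to H_1(X_1)$ is surjective with kernel equal to the annihilator of $L_1^\partial\cap L_2^\partial$ under the Lefschetz pairing $H_1(\Sigma,\partial\Sigma)\times H_1(\Sigma)\to\mathbb{Z}$; this uses that $\partial X_1\setminus(\Sigma_{p,b}\times I)$ carries the sutured splitting $C_\alpha\cup_\Sigma(-C_\beta)$ and that the arc systems $(\alpha^i,a^i)$ cut $\Sigma$ to a disk, so that each $L_i^\partial$ is a maximal isotropic for that pairing. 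Once that local identification is in hand, the map $\rho$ is simply the composite $L_3\hookrightarrow H_1(\Sigma)\twoheadrightarrow H_1(X_1)$ rewritten through the pairing, and the verification that the spectral-sequence differential agrees with it is immediate rather than delicate. Your sketch gestures at Lefschetz duality on $\Sigma$, which is the correct ingredient, but routes it through the wrong ambient duality; fixing this would complete the argument.
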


Moreover, the linking matrix regarding to a special handle decomposition derived from the relative trisection can be calculated by the diagram. The {\it linking matrix of $\mathcal{T}_X$ regarding $\gamma$} is the matrix whose components are linking numbers $\lk(\gamma_i, \gamma_j)$ in $\partial X_1$ by regarding it as submanifold of $S^3$.

\begin{proposition}[Proposition 6.3 in \cite{Tan1}]
Suppose $(\Sigma; \alpha, \beta)$ is a standard diagram of $\Sigma_{p,b} \times I \, \sharp \, ( \sharp^{k_1-l} S^1 \times S^2)$. The linking matrix of $\mathcal{T}_X$ regarding $\gamma$ is $(\gamma \cdot (\beta, a)) \, R^g_{p,b} \, ((\alpha, a) \cdot \gamma)$, where $R^g_{p,b}$ is $ I_{g-p} \oplus^p \begin{pmatrix} 0 & 0 \\ 1 & 0 \end{pmatrix} \oplus O_{b-1}$.
\end{proposition}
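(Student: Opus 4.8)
The plan is to realize the curves $\gamma_i$ as the attaching framed link of the $2$-handles in the handle decomposition of $X$ determined by $\mathcal{T}_X$, and to compute $\lk(\gamma_i,\gamma_j)$ directly in the associated Kirby picture. Recall from the foundational theory of (relative) trisections (Gay--Kirby, Castro--Gay--Pinz\'on-Caicedo) that, because $(\Sigma;\alpha,\beta)$ is a standard diagram of $\Sigma_{p,b}\times I \, \sharp \, (\sharp^{k_1-l}S^1\times S^2)$, the $\beta$-$2$-handles cancel against the associated $3$-handles, so that $X$ is obtained, relative to a collar of $\partial X$, from $X_1 = \natural^{k_1}S^1\times D^3$ by attaching $2$-handles along the framed link $\gamma\subset\Sigma\subset\partial X_1$ with the surface framing, followed by $3$-handles. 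Drawing $X_1$ as a Kirby diagram with its $1$-handles as dotted circles exhibits the $\gamma_i$ as an honest framed link in $S^3$ --- this is what ``$\partial X_1$ regarded as a submanifold of $S^3$'' refers to --- and the matrix in the statement is, by definition, the linking matrix of that framed link.

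To evaluate $\lk(\gamma_i,\gamma_j)$ I would use the sutured Heegaard decomposition $\partial X_1 = C_\beta\cup_\Sigma(-C_\alpha)\cup(\Sigma_{p,b}\times I)$ provided by condition v), with the cut arcs $a$ isotoped (by handle slides permitted by standardness) to lie on $\Sigma$ disjoint from both $\alpha$ and $\beta$, so that $(\beta,a)$ and $(\alpha,a)$ are cut systems for $C_\beta$ and $C_\alpha$. For fixed $j$, construct a $2$-chain $F_j$ in $\partial X_1$ with $\partial F_j=\gamma_j$ (a Seifert surface in the Kirby picture) out of the compressing disks of $C_\beta$ (bounded by the $\beta_n$), the rectangles $a_n\times I$ in the collar, and a subsurface of $\Sigma$. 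The coefficients with which the $\beta$-disks and collar rectangles occur in $F_j$ are exactly the entries of the column $R^g_{p,b}\,\big((\alpha,a)\cdot\gamma\big)_{\bullet j}$: pairing $[\gamma_j]\in H_1(\Sigma,\partial\Sigma)$ against the cut system $(\alpha,a)$ reads off its coordinates, and $R^g_{p,b}$ --- whose explicit block form is read off from the standard model --- converts these into chain coefficients with respect to $(\beta,a)$.

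Finally, $\lk(\gamma_i,\gamma_j)$ is the intersection number in $\partial X_1$ of $F_j$ with the pushoff of $\gamma_i$ into $C_\beta$; since this pushoff misses $\Sigma$ itself and the interior of $C_\alpha$, its only intersections with $F_j$ occur where it crosses a $\beta$-disk or a collar rectangle, and each such crossing corresponds to a point of $\gamma_i\cap\beta_n$ or $\gamma_i\cap a_n$ on $\Sigma$. The resulting count is the row $\big(\gamma\cdot(\beta,a)\big)_{i\bullet}$ applied to the chain coefficients of $F_j$, which assembles into $\lk(\gamma_i,\gamma_j)=\big((\gamma\cdot(\beta,a))\,R^g_{p,b}\,((\alpha,a)\cdot\gamma)\big)_{ij}$. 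The particular form $R^g_{p,b}=I_{g-p}\oplus^{p}\begin{pmatrix}0&0\\1&0\end{pmatrix}\oplus O_{b-1}$ records that the $g-p$ ``$S^1\times S^2$-type'' directions pair diagonally, that the two arcs of each of the $p$ genus pairs of the page $\Sigma_{p,b}$ bound disks on opposite sides in the standard model (the rank-one nilpotent block), and that the $b-1$ boundary-parallel arc directions contribute nothing to linking.

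The main obstacle is the relative bookkeeping, which has no counterpart in the closed case of \cite{FKSZ1}: the classes $[\gamma_j]$ are genuinely not linear combinations of $[\alpha]$ and $[\beta]$, so one must work in $H_1(\Sigma,\partial\Sigma)$ with the cut systems $(\alpha,a)$ and $(\beta,a)$ throughout and verify that (i) a common arc system $a$ can be isotoped off both $\alpha$ and $\beta$ without destroying standardness; (ii) the surface framing contributes no correction to the diagonal entries; and (iii) the coordinate change relating the $(\beta,a)$-chain description of $F_j$ to the $(\alpha,a)$-intersection description of $\gamma_j$ is exactly $R^g_{p,b}$, with the stated signs. The last of these forces careful attention to every orientation convention of the relative trisection --- the orientations of the $C_i$ inside $\partial X_i$, of $\Sigma$, and of the sutures --- and is where the precise shape of $R^g_{p,b}$ gets pinned down.
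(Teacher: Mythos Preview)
The paper does not contain a proof of this proposition: it is quoted verbatim as Proposition~6.3 of the author's earlier paper \cite{Tan1} and used as a black box in the proof of Lemma~4.2. There is therefore no argument in the present paper to compare your proposal against.

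That said, your outline --- realize $\gamma$ as the $2$-handle attaching link in the Kirby picture of $X_1$, build a Seifert $2$-chain for $\gamma_j$ out of the compressing disks of $(\beta,a)$ together with a piece of $\Sigma$, and read off $\lk(\gamma_i,\gamma_j)$ by counting intersections of a pushoff of $\gamma_i$ with those disks --- is the standard and essentially unique approach to this kind of formula, and is almost certainly what \cite{Tan1} does. One point you gloss over deserves a flag: $\gamma_j$ need not be null-homologous in $\partial X_1 \cong \sharp^{k_1} S^1\times S^2$, so the ``Seifert surface'' $F_j$ does not literally live in $\partial X_1$. It lives in the ambient $S^3$ of the Kirby picture, where the dotted circles have been filled back in; concretely, the arcs $a_n$ must be capped by disks that pass through the $1$-handles. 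Your sentence about ``the rectangles $a_n\times I$ in the collar'' does not quite do this, and the middle $2p$ block $\oplus^p\begin{pmatrix}0&0\\1&0\end{pmatrix}$ of $R^g_{p,b}$ is exactly what records how those cappings interact with the page geometry. Apart from this bookkeeping, which you yourself identify as the main obstacle, the sketch is sound.
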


\section{A property of $\gamma$}
In this section, we give a proof of the main theorem. It is shown in the same way as the first part in the proof of the Theorem 4.4 in \cite{FKSZ1} that a $(g, (l, l, k); p, b)$-relative trisection of $X$ has a similar diagram to the Theorem without additional conditions.

\begin{lemma}
A  $(g, (l, l, k); p, b)$-relative trisection $\mathcal{T}_X$ has a diagram $(\Sigma; \alpha, \beta, \gamma)$ such that:
\begin{enumerate}
\item[(1)] $(\Sigma; \alpha, \beta)$ is a standard diagram of $\Sigma_{p,b} \times I$;
\item[(2)] $\gamma \cdot \beta = \beta \cdot \alpha = I_{g-p}$, where $I_{g-p}$ is the $g-p$ identity matrix.
\end{enumerate}
\end{lemma}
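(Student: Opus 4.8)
The plan is to start from an arbitrary diagram of $\mathcal{T}_X$, normalize $(\Sigma;\alpha,\beta)$ using the uniqueness results already available, and then correct $\gamma$ by handle slides performed only among the $\gamma$-curves, so that the first normalization survives. Concretely: take any diagram $(\Sigma;\alpha,\beta,\gamma)$ of $\mathcal{T}_X$. By Corollary 3.3 (with $i=1$), $(\Sigma;\alpha^1,\alpha^2)=(\Sigma;\alpha,\beta)$ is diffeomorphism and handle slide equivalent to $(\Sigma;\delta^{k_1},\epsilon^{k_1})$, which is the standard diagram $(\Sigma;\delta^l,\epsilon^l)$ of Figure 1 since $k_1=l$. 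Applying the ambient diffeomorphism of $\Sigma$ to the whole triple and then the handle slides within $\alpha$ and within $\beta$, I may assume $\alpha$ and $\beta$ are, up to isotopy, the curves $\delta^l$ and $\epsilon^l$; then $i(\alpha_i,\beta_j)=\delta_{ij}$, which is (1), and after orienting the $\alpha$-curves suitably $\langle[\alpha_i],[\beta_j]\rangle_\Sigma=-\delta_{ij}$, i.e. $\beta\cdot\alpha=I_{g-p}$. From now on only the $\gamma$-curves are moved, so (1) and $\beta\cdot\alpha=I_{g-p}$ persist.

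Next I would show that the integral matrix $\gamma\cdot\beta$ lies in $GL_{g-p}(\mathbb{Z})$. Since $k_2=l$ as well, Corollary 3.3 (with $i=2$) makes $(\Sigma;\alpha^2,\alpha^3)=(\Sigma;\beta,\gamma)$ equivalent to $(\Sigma;\delta^l,\epsilon^l)$: there is a diffeomorphism $\Phi$ of $\Sigma$ with $\Phi(\beta)$ and $\Phi(\gamma)$ handle slide equivalent to $\delta^l$ and $\epsilon^l$. Handle slides and isotopies change the tuple of homology classes only by integral elementary transvections, and reorienting a curve supplies a sign, so $[\Phi(\beta)_i]=\sum_m P_{im}[\delta^l_m]$ and $[\Phi(\gamma)_i]=\sum_k G_{ik}[\epsilon^l_k]$ with $P,G\in GL_{g-p}(\mathbb{Z})$. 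Since $i(\delta^l_i,\epsilon^l_j)=\delta_{ij}$, the matrix $D$ with $D_{km}=\langle[\epsilon^l_k],[\delta^l_m]\rangle_\Sigma$ is diagonal with entries $\pm1$; and because $\Phi_*$ rescales the intersection pairing on $\Sigma$ by $\pm1$, we get $\gamma\cdot\beta=\pm\,G\,D\,\transpose{P}\in GL_{g-p}(\mathbb{Z})$.

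Finally, handle slides among the $\gamma$-curves, together with the free choice of orientations, realize every element of $GL_{g-p}(\mathbb{Z})$ acting from the left on the tuple $([\gamma_i])$: for $g-p\ge 1$, $SL_{g-p}(\mathbb{Z})$ is generated by the elementary matrices $E_{ij}(\pm1)$, each realized by a single band sum, and one reorientation contributes determinant $-1$ (for $g-p\le1$ there is nothing, or only a sign, to fix). Carrying out the sequence of moves realizing $(\gamma\cdot\beta)^{-1}$ replaces $\gamma$ by a new collection of simple closed curves with $\gamma\cdot\beta=I_{g-p}$, while $\alpha$ and $\beta$, hence (1) and $\beta\cdot\alpha=I_{g-p}$, are untouched and the triple remains a diagram of $\mathcal{T}_X$. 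This establishes (2).

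The hard part is the middle step: turning the \emph{abstract} equivalence of $(\Sigma;\beta,\gamma)$ with the model diagram into the concrete integrality statement that $\gamma\cdot\beta$ is unimodular, which forces one to track carefully how diffeomorphisms and handle slides act on the intersection matrix and how the orientation conventions interact. The first and last steps are then essentially bookkeeping — the first with the cited uniqueness of sutured splittings, the last with the classical generation of $SL_{g-p}(\mathbb{Z})$ by elementary matrices.
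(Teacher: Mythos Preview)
Your proposal is correct and follows essentially the same route the paper intends: the paper does not spell out a proof of this lemma but simply refers to ``the first part in the proof of Theorem 4.4 in \cite{FKSZ1}'', and that argument is precisely the one you give---standardize $(\Sigma;\alpha,\beta)$ via the uniqueness of sutured splittings (Corollary 3.3 with $k_1=l$), use the analogous uniqueness for $(\Sigma;\beta,\gamma)$ (with $k_2=l$) to deduce that $\gamma\cdot\beta$ is unimodular, and then realize $(\gamma\cdot\beta)^{-1}$ by handle slides and reorientations among the $\gamma$-curves alone. Your tracking of how the diffeomorphism $\Phi$ and the handle slides act on the intersection matrix is the detail the paper leaves implicit.
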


Even though we suppose $X$ has a $(g, (l, l, k); p, b)$-relative trisection, $X$ may not be simply connected. We impose $H_1(X) = 0$ to get in the similar situation to the closed case.

\begin{lemma}
Suppose $H_1(X) = 0$, and $X$ has a $(g, (l, l, k); p, b)$-relative trisection $\mathcal{T}_X$. Then, $\mathcal{T}_X$ admits a diagram $(\Sigma; \alpha, \beta, \gamma)$ and there exists a collection $\eta$ which consists of $l$ simple closed curves in $\Sigma$ such that:
\begin{enumerate}
\item[(1)] $(\Sigma; \alpha, \beta)$ is a standard diagram of $\Sigma_{p,b} \times I$;
\item[(2)] In $H_1(\Sigma)$, we have  \[ [\gamma_i] = -[\alpha_i] - \sum_{j=1}^{g-p} (\alpha \cdot \gamma)_{ji} [\beta_j] - d_i [\eta_i]; \]
\item[(3)] $(\alpha \cdot \gamma)_{ji} = (\alpha \cdot \gamma)_{ij}$ for $i > l$, and $(\alpha \cdot \gamma)_{ji} = 0$ for $i > l+b_2(X)$.
\end{enumerate}
\end{lemma}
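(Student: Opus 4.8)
The plan is to begin from the diagram produced by Lemma 4.1 and improve it by handle slides, tracking every operation on $H_1(\Sigma)$. Fix the diagram $(\Sigma;\alpha,\beta,\gamma)$ of Lemma 4.1, so $(\Sigma;\alpha,\beta)$ is standard and $\gamma\cdot\beta=\beta\cdot\alpha=I_{g-p}$. Since the curves of $\alpha$ are mutually disjoint, likewise those of $\beta$, and $\langle[\beta_i],[\alpha_j]\rangle_\Sigma=\delta_{ij}$, the subgroup $V:=\langle[\alpha_1],[\beta_1],\dots,[\alpha_{g-p}],[\beta_{g-p}]\rangle$ is a direct summand of $H_1(\Sigma)$ on which $\langle-,-\rangle_\Sigma$ is unimodular; write $H_1(\Sigma)=V\oplus W$ with $W:=V^{\perp}$, of rank $2p+b-1=l$. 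Pairing $[\gamma_i]$ successively with the $[\beta_j]$ and the $[\alpha_j]$ and using $\gamma\cdot\beta=\beta\cdot\alpha=I_{g-p}$ forces
\[ [\gamma_i]=-[\alpha_i]-\sum_{j=1}^{g-p}(\alpha\cdot\gamma)_{ji}[\beta_j]+w_i,\qquad w_i:=[\gamma_i]_W\in W. \]
Hence it remains only to arrange, by handle slides, that $w_i=0$ for $i>l$ and that each $w_i$ with $i\le l$ is primitive — then $\eta_i$ can be taken to be a simple closed curve with $[\eta_i]=-w_i$ — and then to establish (3).

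\emph{Using $H_1(X)=0$.} I would feed this diagram into the chain complex of Proposition 3.6. With the auxiliary arcs $a$ chosen in the part of $\Sigma$ lying over the binder and disjoint from both $\alpha$ and $\beta$ (so that they serve as the arc system for $L_1^{\partial}$ and for $L_2^{\partial}$), one identifies $L_1^{\partial}\cap L_2^{\partial}$ with $\langle[a_1],\dots,[a_l]\rangle\cong\mathbb{Z}^l$, which is Lefschetz--dual to $W$ inside $\Sigma_1$; under this identification $\rho([\gamma_i])$ is the functional $[a_m]\mapsto\langle[a_m],w_i\rangle_\Sigma$, so $\rho$ is surjective iff $w_1,\dots,w_{g-p}$ generate $W$. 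Thus $H_1(X)=0$ means precisely that $\{w_i\}$ generates $W\cong\mathbb{Z}^l$. It follows that there is $P\in GL_{g-p}(\mathbb{Z})$ taking the tuple $(w_i)$ to $(f_1,\dots,f_l,0,\dots,0)$ for a suitable basis $\{f_m\}$ of $W$. Realize $P$ by handle slides and re-orientations of $\gamma$, and \emph{simultaneously} apply the slides realizing $P$ to $\alpha$ and $(P^{-1})^{t}$ to $\beta$. A direct check shows these preserve standardness of $(\Sigma;\alpha,\beta)$, preserve $\gamma\cdot\beta=\beta\cdot\alpha=I_{g-p}$, and leave $V$ and $W$ unchanged, while now $w_i=0$ for $i>l$ and $w_i=f_i$ for $i\le l$. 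Taking simple closed curves $\eta_i$ ($i\le l$) with $[\eta_i]=-f_i$ yields (1) and (2).

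\emph{Symmetry and vanishing in (3).} Substituting $\gamma\cdot\beta=I_{g-p}$ into Proposition 3.7 gives $\Lambda=(\alpha\cdot\gamma)+\transpose{C}\,B\,C$, where $\Lambda$ is the linking matrix (hence symmetric), $C:=a\cdot\gamma$, and $B$ is the $l\times l$ block of $R^{g}_{p,b}$; moreover $C_{mi}=\langle[a_m],w_i\rangle_\Sigma$, so the $i$-th column of $C$ vanishes once $w_i=0$. Hence for $i>l$ one has $(\transpose{C}BC)_{ij}=(\transpose{C}BC)_{ji}=0$, so $(\alpha\cdot\gamma)_{ji}=\Lambda_{ji}=\Lambda_{ij}=(\alpha\cdot\gamma)_{ij}$, the first assertion of (3). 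For the second, note that the columns of $(\alpha\cdot\gamma)$ of index $>l$ coincide with those of $\Lambda$, and run a rank count in Proposition 3.6: for the current diagram $L_2\cap L_3=0$, $\ker\rho=\langle[\gamma_{l+1}],\dots,[\gamma_{g-p}]\rangle$, and $L_1\cap L_3$ consists of the $\sum_{i>l}t_i[\gamma_i]$ with $(t_{l+1},\dots,t_{g-p})$ in the kernel of the index-$>l$ block of $(\alpha\cdot\gamma)$; hence $b_2(X)=\operatorname{rank}H_2(X)=(g-p-l)-\operatorname{rank}(L_1\cap L_3)$ equals the rank of that block. Therefore a further admissible simultaneous slide with $P=I_l\oplus P'$ (which disturbs nothing proved so far) column-reduces the block so that its last $g-p-l-b_2(X)$ columns vanish, i.e. $(\alpha\cdot\gamma)_{ji}=0$ for $i>l+b_2(X)$.

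\emph{Main obstacle.} The delicate part is keeping all three requirements simultaneously intact — standardness of $(\Sigma;\alpha,\beta)$, the normalization $\gamma\cdot\beta=\beta\cdot\alpha=I_{g-p}$, and the achieved form of the $w_i$ — under the handle slides, in particular checking that the simultaneous slides by $P$, $(P^{-1})^{t}$, $P$ are geometrically realizable (they are: $GL_{g-p}(\mathbb{Z})$ is generated by transvections and a diagonal sign, and slides within one cut system affect neither the other cut systems nor the compression bodies, so condition (v) of the trisection is untouched), together with pinning the rank of the relevant block to $b_2(X)$ via Proposition 3.6. That bookkeeping is where the actual work lies.
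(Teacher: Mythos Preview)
Your proof is correct and follows essentially the same route as the paper's: start from Lemma~4.1, use Proposition~3.6 and $H_1(X)=0$ to normalize the $W$--components of the $[\gamma_i]$ via simultaneous handle slides on $\gamma,\alpha,\beta$ (the paper phrases this as splitting two short exact sequences and fixes the curves $\eta$ from a standard picture at the outset rather than choosing them at the end, but the content is identical), and then read off (3) from the symmetry of the linking matrix in Proposition~3.7 together with a rank count. One cosmetic slip: since $\gamma\cdot a=-\transpose{(a\cdot\gamma)}$, your linking-matrix formula should read $\Lambda=(\alpha\cdot\gamma)-\transpose{C}BC$, but the sign is irrelevant to the argument.
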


\begin{proof}
At first, we take a diagram $(\Sigma; \alpha, \beta, \gamma)$ of $\mathcal{T}_X$ by applying Lemma 4.1. From the standardness of $(\Sigma; \alpha, \beta)$, we can obtain a collection of arcs $a$ on $\Sigma$ which is disjoint from $\alpha$ and $\beta$. Additionally, we take a collection of curves $\eta$ on $\Sigma_{\alpha}$ as the following diagram.
\begin{figure}[H]
\centering
\includegraphics[keepaspectratio,scale=0.9,pagebox=cropbox]{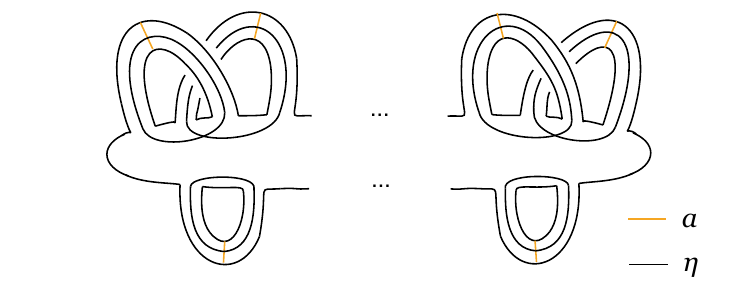}
\caption{The curves $a$ and $\eta$.}
\end{figure} \noindent

\noindent Since $a$ is disjoint from $\alpha$ and $\beta$, $\eta$ can be regarded as curves on $\Sigma$. We orient $a$ and $\eta$ satisfying $\langle [a_i], [\eta_j] \rangle_{\Sigma} = \delta_{ij}$. Since $\{[\alpha_i], [a_j]\}$ and $\{[\beta_i], [a_j]\}$ form a basis of $L_1^{\partial}$ and $L_2^{\partial}$, respectively. Therefore, $\{[a_j]\}$ forms a basis of $L_1^{\partial} \cap L_2^{\partial}$, and $\{[\eta_j]\}$ can be regarded as a basis of $\Hom(L_1^{\partial} \cap L_2^{\partial}, \mathbb{Z})$.

From the assumption $H_1(X) = 0$, the 2-boundary map $\rho$ is surjective. Moreover, since $\Hom(L_1^{\partial} \cap L_2^{\partial}, \mathbb{Z})$ is free, the following exact sequence split: \vspace{-1ex}
\begin{figure}[H]
\begin{tikzcd}
            0 \ar{r} & \ker \rho \ar{r} & L_3 \ar{r}{\rho} & \Hom(L_1^{\partial} \cap L_2^{\partial} , \mathbb{Z}) \ar{r} & 0.
\end{tikzcd}
\end{figure}
\vspace{-1ex}
\noindent We take a basis of $L_{\gamma}$ such that each first $l$ component corresponds to $-[\eta_i]$ by $\rho$. Because a change of basis can be represented by a sequence of handle slides, this basis is represented by another cut system $\gamma$ for $C_3$. For a unimodular matrix, we perform a sequence of handle slides regarding $\beta$ such that its change of basis is represented by the given matrix. Then, performing a sequence of handle slides regarding $\alpha$, we can keep a standardness of $(\Sigma; \alpha, \beta)$ (the same argument as the proof of the Theorem 4.4 in \cite{FKSZ1}). Note that these slides are performed to be disjoint from $a$ since $\Sigma$ split into $\Sigma'$ and $P$ such that $\alpha$ and $\beta$ are in $\Sigma'$ and $a$ is in $P$. Therefore, when we obtain the new cut system $\gamma$, we can keep the conditions (1) and (2) of the Lemma 4.1. Moreover, since $\{[\alpha_i], [\beta_i], [\eta_j]\}$ is an almost symplectic basis of $H_1(\Sigma)$, we have the formula of the condition (2).

$H_2(X)$ has no torsion since $H_1(X) = 0$. Then, the following exact sequence also split: \vspace{-1ex}
\begin{figure}[H]
\begin{tikzcd}
            0 \ar{r} & L_1 \cap L_3 \ar{r} & \ker \rho \ar{r} & H_2(X) \ar{r} & 0. 
\end{tikzcd}
\end{figure}
\vspace{-1ex}
\noindent $L_3$ is isomorphic to $\langle [\gamma_1], \dots , [\gamma_l] \rangle \oplus H_2(X) \oplus L_1 \cap L_3$. Applying the same argument as before to a basis of $\ker \rho$ regarding this correspondence, we also obtain the new $\gamma$. This $\gamma$ still satisfies the condition (2). Since $[\gamma_{i+l+b_2(X)}]$ are elements of $L_1 \cap L_3$, its coefficients of $[\beta_j]$ must be $0$. Therefore, we have $(\alpha \cdot \gamma)_{ji} = 0$ for $i > l + b_2(X)$.

Calculating the linking matrix of $\mathcal{T}_X$ regarding this $\gamma$ by using Proposition 3.7, we have
\[(\gamma \cdot (\beta, a)) \, R^g_{p,b} \, ((\alpha, a) \cdot \gamma) = \alpha \cdot \gamma - \oplus^p \begin{pmatrix} 0 & 0 \\ 1 & 0 \end{pmatrix} \oplus O_{g-3p}. \]
For $i > l$, $(\alpha \cdot \gamma)_{ji}$ is equal to the $(j,i)$-component of the linking matrix. Since linking matrix is symmetric, the condition (3) holds.
\end{proof}

Before starting the proof of the main theorem, we prepare an algebraic lemma in regard to a bilinear form $(A, Q)$. This lemma is similar to the Lemma 1.2.12 in \cite{GS1}.

\begin{lemma}
Let $Q$ be a bilinear form on a free group $A = A_1 \oplus A_2$ satisfying $Q(a, a_1) = Q(a_1, a)$ for all $a \in A$ and $a_1 \in A_1$. Suppose that the restriction of $Q$ to $A_1$ is unimodular. Them, $(A, Q)$ can be split as the sum of forms $(A_1, Q|A_1) \oplus (A_1^{\bot}, Q|A_1^{\bot})$, where $A_1^{\bot} = \{ y \in A \, \, | \, \, Q(x, y) = 0 \, \, \text{for all} \, \, x \in A_1\}.$ Moreover,  an isomorphism $\phi : A_2 \to A_1^{\bot}$ can be obtained such that there exists $b \in A_1$ for $x \in A_2$ and $\phi(x) = x-b$. 
\end{lemma}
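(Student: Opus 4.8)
The plan is to follow the proof of Lemma 1.2.12 in \cite{GS1}, but with care about where the partial symmetry hypothesis $Q(a,a_1)=Q(a_1,a)$ is used, since $Q$ is not assumed symmetric on all of $A$. First I would use that $Q|A_1$ is unimodular to identify $A_1$ with $\Hom(A_1,\mathbb{Z})$ via the adjoint isomorphism $b \mapsto \bigl(a_1 \mapsto Q(a_1,b)\bigr)$. Consequently, for each $x \in A_2$ there is a unique $b(x) \in A_1$ with $Q(a_1,b(x)) = Q(a_1,x)$ for all $a_1 \in A_1$. I would then set $\phi(x) := x - b(x)$.

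Next I would check the properties of $\phi$ one at a time. Uniqueness of $b(x)$ forces $b(x+x')=b(x)+b(x')$ and $b(nx)=nb(x)$, so $\phi$ is a homomorphism $A_2 \to A$; and $Q(a_1,\phi(x)) = Q(a_1,x)-Q(a_1,b(x)) = 0$ for all $a_1\in A_1$ shows $\phi$ lands in $A_1^{\bot}$. It is injective because the $A_2$-component of $\phi(x)$, in the given decomposition $A=A_1\oplus A_2$, is exactly $x$. For surjectivity onto $A_1^{\bot}$, given $y=y_1+y_2 \in A_1^{\bot}$ with $y_i\in A_i$, the relation $0=Q(a_1,y)=Q(a_1,y_1)+Q(a_1,y_2)$ for all $a_1$ says, by uniqueness, that $b(y_2)=-y_1$; hence $\phi(y_2)=y_2+y_1=y$. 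Thus $\phi:A_2\to A_1^{\bot}$ is an isomorphism of the required form $\phi(x)=x-b$ with $b=b(x)\in A_1$.

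Finally I would assemble the orthogonal splitting. For $a=a_1+a_2\in A_1\oplus A_2$ one has $a=(a_1+b(a_2))+\phi(a_2)$ with $a_1+b(a_2)\in A_1$ and $\phi(a_2)\in A_1^{\bot}$, so $A=A_1+A_1^{\bot}$; and $A_1\cap A_1^{\bot}=0$ because $Q(\,\cdot\,,a_1)|_{A_1}\equiv 0$ forces $a_1=0$ by unimodularity. Hence $A=A_1\oplus A_1^{\bot}$ as abelian groups. This splitting is $Q$-orthogonal: if $a_1\in A_1$ and $y\in A_1^{\bot}$ then $Q(a_1,y)=0$ by definition of $A_1^{\bot}$, and $Q(y,a_1)=Q(a_1,y)=0$ by the hypothesis $Q(a,a_1)=Q(a_1,a)$ with $a=y$. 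Therefore $(A,Q)=(A_1,Q|A_1)\oplus(A_1^{\bot},Q|A_1^{\bot})$.

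There is no deep obstacle here; the argument is purely about finitely generated free abelian groups. The one point to watch is that unimodularity of $Q|A_1$ (not merely non-degeneracy) is essential for $b(x)$ to exist \emph{inside} $A_1$ over $\mathbb{Z}$, and that the partial symmetry hypothesis is used exactly once — and is exactly what is needed — to make the splitting orthogonal on the side $Q(y,a_1)$, since full symmetry of $Q$ is not available.
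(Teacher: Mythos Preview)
Your proof is correct and follows essentially the same approach as the paper's own proof: both use unimodularity of $Q|A_1$ to produce the unique $b_x\in A_1$ with $Q(a_1,x)=Q(a_1,b_x)$, set $\phi(x)=x-b_x$, verify $A=A_1\oplus A_1^{\bot}$ via this map, and invoke the partial symmetry hypothesis exactly once to obtain $Q(y,a_1)=0$ for $y\in A_1^{\bot}$. Your write-up is in fact somewhat more detailed than the paper's (which defines $b_x$ for all $x\in A$ and then restricts the resulting projection to $A_2$), but the substance is identical.
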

\begin{proof}
$A_1 \cap A_1^{\bot} = 0$ is clear from the unimodularity of $Q|A_1$. For any $x \in A$, we can take the linear function $a \mapsto Q(a, x)$ on $A_1$. By the unimodularity of $Q|A_1$, there is a unique element $b_x \in A_1$ satisfying $Q(a, x) = Q(a, b_x)$ for all $a \in A_1$. Therefore, $x-b_x \in A_1^{\bot}$, and then we can see $A = A_1 \oplus A_1^{\bot}$. Moreover, $Q = Q|A_1 \oplus Q|A_1^{\bot}$ can be proven by using the symmetry that $Q(a, a_1)$ is equal to $Q(a_1, a)$ for all $a \in A$ and $a_1 \in A_1$.

Let $f : A \to A_1^{\bot}$ be the homomorphism defined by $f(x) = x-b_x$. $\ker f = A_1$ is clear. For $c \in A_1^{\bot}$, there exist $a_i \in A_i$ such that $c = a_1 + a_2$. Since $b_{a_2} = -a_1$, we have $f (a_2) = c$. Therefore, we should define $\phi$ as the restriction of $f$ to $A_2$.
\end{proof}

We give a proof of the main theorem here.

\begin{theorem*}
Suppose $H_1(X) = H_1(\partial X) = 0$, and $X$ has a $(g, (l, l, k); p, b)$-relative trisection $\mathcal{T}_X$. Then, $\mathcal{T}_X$ admits a diagram $(\Sigma; \alpha, \beta, \gamma)$, and there exists a collection $\eta$ which consists of $l$ simple closed curves in $\Sigma$ such that:
\begin{enumerate}
\item[(1)] $(\Sigma; \alpha, \beta)$ is a standard diagram of $\Sigma_{p,b} \times I$;
\item[(2)] In $H_1(\Sigma)$, we have \[ [\gamma_i] = -[\alpha_i] - \sum_{j=1}^{g-p} \widetilde{Q}_{ji} [\beta_j] - d_i [\eta_i], \] where $Q$ is the intersection form of $X$ and $\widetilde{Q} = A_{\psi}^{-1} \oplus Q \oplus \langle 0 \rangle^{k-l}$.
\end{enumerate}
\end{theorem*}
\begin{proof}
Firstly, we take a diagram $(\Sigma; \alpha, \beta, \gamma)$ satisfying the conditions of Lemma 4.2. The matrix $\alpha \cdot \gamma$ can be expressed as follows:
\[ \alpha \cdot \gamma = \begin{pmatrix} C & D \\ {}^tD & Q \end{pmatrix} \oplus \langle 0 \rangle^{k-l}, \]
where $C$, $D$ and $Q$ are $(l, l)$-, $(l, b_2)$- and $(b_2, b_2)$-matrices, respectively. Since $\{ [\gamma_{l+1}], \dots , [\gamma_{l+b_2}] \}$ forms a basis of $H_2(X)$ and $\alpha \cdot \gamma$ is equal to the linking matrix for this basis, $Q$ is the representation matrix of the intersection form of $X$. From the assumption $H_1(\partial X) = 0$, $Q$ is unimodular. Applying the Lemma 4.3 to the first summand of $\alpha \cdot \gamma$ such that $A_1 = \langle [\gamma_{l+1}], \dots , [\gamma_{l+b_2}] \rangle$ and $A_2 = \langle [\gamma_1], \dots , [\gamma_l] \rangle$, we take the new basis $\{\phi([\gamma_1]), \dots , \phi([\gamma_l]), [\gamma_{l+1}], \dots , [\gamma_{l+b_2}]\}$ of $\langle [\gamma_1], \dots , [\gamma_{l+b_2}] \rangle$. In the same way as before, we obtain the new $\gamma$ regarding this basis. Since $\phi([\gamma_i]) = [\gamma_i] - b_i$ for $b_i \in A_1$, the new diagram $(\Sigma; \alpha, \beta, \gamma)$ still satisfies the conditions of the Lemma 4.2. Moreover, $\alpha \cdot \gamma$ can be expressed as $B \oplus Q \oplus \langle 0 \rangle^{k-l}$, where $B$ is a $(l, l)$-matrix.

We should prove $B$ is the inverse of the monodromy representation matrix. Note that $B$ is invertible since $\kappa^3 \cdot \lambda^4 = B \oplus Q$ is unimodular. Applying the Proposition 3.5 to $(\Sigma; \alpha, \beta, \gamma)$, we have the following formulas in $H_1(\Sigma)$:
\begin{align}
[a^2_i - a^1_i] &= 0, \notag \\ 
[a^3_i - a^2_i] &= - [\beta_i], \notag \\ 
[a^4_i - a_i] &= \sum^l_{j=1} B^{-1}_{ji}([\alpha_j] + [\eta_j]). \notag
\end{align}
Therefore, $B^{-1}$ is the representation matrix of $\xi_{\psi} : H_1(\Sigma_1, \partial \Sigma_1) \to H_1(\Sigma_1)$ regarding bases $\{[a_i]\}$ and $\{[\eta_i]\}$.
\end{proof}

\section{Torelli group of $\Sigma$}
We see a relation between $(g, (l, l, k); p, b)$-trisection diagram and the Torelli group of $\Sigma$ in this section. We define the Torelli group $\mathcal{I}(\Sigma)$ following \cite{Chu1}. 

Let $\widehat{\Sigma}$ be a surface obtained by gluing $\Sigma_{0, b+1}$ to $\Sigma$ along $b$ boundary components of $\Sigma_{0,b+1}$ and $\partial \Sigma$. Note that $\widehat{\Sigma}$ is the totally separated surface constructed by the nonseparating partition of $\Sigma$ in \cite{Chu1}. We define $H(\Sigma)$ as $H_1(\widehat{\Sigma})$.

Let $\mathcal{M}(\Sigma)$ be the mapping class group of $\Sigma$, that is, the group of the rel $\partial \Sigma$ isotopy class of the self-homomorphism fixing boundaries. The natural inclusion $i : \Sigma \to \widehat{\Sigma}$ induces a homomorphism $i_* : \mathcal{M}(\Sigma) \to \mathcal{M}(\widehat{\Sigma})$ by using the identity on $\Sigma_{0,b+1}$. Composing it with the natural action of $\mathcal{M}(\widehat{\Sigma})$ on $H(\Sigma)$, we obtain an action of $\mathcal{M}(\Sigma)$ on $H(\Sigma)$. The {\it Torelli group of $\Sigma$} is defined by using this action as follows:
\[ \mathcal{I}(\Sigma) := \{\phi \in \mathcal{M}(\Sigma) \, \, | \, \, \phi \, \, \text{acts trivially on} \, \, H(\Sigma)\}. \]

Since any $\phi \in \mathcal{M}(\Sigma)$ has the action $\xi_{\phi} : H_1(\Sigma, \partial \Sigma) \to H_1(\Sigma)$, we can define another subgroup as follows:
\[ \mathcal{I}'(\Sigma) := \{\phi \in \mathcal{M}(\Sigma) \, \, | \, \, \xi_{\phi}(x) = 0 \, \, \text{for all} \, \, x \in H_1(\Sigma, \partial \Sigma) \}. \]

\begin{lemma}
$\mathcal{I}(\Sigma) = \mathcal{I}'(\Sigma)$.
\end{lemma}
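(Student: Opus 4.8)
The plan is to show that each of $\mathcal{I}(\Sigma)$ and $\mathcal{I}'(\Sigma)$ equals the set of $\phi\in\mathcal{M}(\Sigma)$ that act trivially on $H_1(\Sigma)$ \emph{and} kill a fixed finite list of relative arc classes, and then to observe that the two extra conditions are literally the same. First I would record the algebraic link between $\xi_\phi$ and the ordinary action on $H_1(\Sigma)$: writing $j_*\colon H_1(\Sigma)\to H_1(\Sigma,\partial\Sigma)$ for the natural map, one has $\xi_\phi\circ j_*=\phi_*-\mathrm{id}$ on $H_1(\Sigma)$, since for a closed $1$-cycle $c$ the chain $\phi(c)-c$ is again a cycle and represents $\phi_*[c]-[c]$. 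In particular $\xi_\phi=0$ forces $\phi_*=\mathrm{id}$ on $H_1(\Sigma)$, so $\mathcal{I}'(\Sigma)$ lies in the subgroup $K:=\{\phi\in\mathcal{M}(\Sigma):\phi_*=\mathrm{id}\text{ on }H_1(\Sigma)\}$. On the other side, since $\widehat\Sigma$ is obtained from $\Sigma$ by gluing on the planar piece $\Sigma_{0,b+1}$, a deformation retraction of $\Sigma_{0,b+1}$ rel the $b$ curves glued to $\partial\Sigma$ onto those $b$ curves together with $b-1$ disjoint arcs joining them exhibits $\widehat\Sigma$, up to homotopy, as $\Sigma$ with $b-1$ one-cells attached at boundary points, that is as $\Sigma\vee\bigvee^{b-1}S^1$. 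Hence $i_*\colon H_1(\Sigma)\to H_1(\widehat\Sigma)=H(\Sigma)$ is injective, and since the extension $\widehat\phi:=i_*(\phi)$ restricts to $\phi$ on $\Sigma$ we have $\widehat\phi_*\circ i_*=i_*\circ\phi_*$, giving $\mathcal{I}(\Sigma)\subseteq K$ as well.

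Next I would fix bases adapted to this picture. Label the boundary components $\partial_1,\dots,\partial_b$ of $\Sigma$ and choose pairwise disjoint, properly embedded arcs $a_i\subset\Sigma$ from $\partial_i$ to $\partial_b$, for $i=1,\dots,b-1$. In the long exact sequence of the pair $(\Sigma,\partial\Sigma)$ the boundary map $\partial\colon H_1(\Sigma,\partial\Sigma)\to H_0(\partial\Sigma)$ sends $[a_i]$ to $[\partial_i]-[\partial_b]$, and these classes are a basis of $\operatorname{im}\partial=\ker\!\big(H_0(\partial\Sigma)\to H_0(\Sigma)\big)$; as $\operatorname{im}j_*=\ker\partial$, the classes $[a_1],\dots,[a_{b-1}]$ together with $\operatorname{im}j_*$ generate $H_1(\Sigma,\partial\Sigma)$. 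Complete each $a_i$ to a loop $\widehat a_i\subset\widehat\Sigma$ by a disjoint arc $\delta_i\subset\Sigma_{0,b+1}$ with the same endpoints; taking the connecting paths in the retraction above to be the $a_i$, the classes $[\widehat a_1],\dots,[\widehat a_{b-1}]$ together with $i_*$ of a basis of $H_1(\Sigma)$ form a basis of $H_1(\widehat\Sigma)$.

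The crux is a chain-level identity. Since $\phi$ fixes $\partial\Sigma$ pointwise and $\widehat\phi$ is the identity on $\Sigma_{0,b+1}$, hence on each $\delta_i$, the $1$-chain $\widehat\phi(\widehat a_i)-\widehat a_i$ equals $\phi(a_i)-a_i$, which is supported in $\Sigma$ and is a cycle there; therefore $\widehat\phi_*[\widehat a_i]-[\widehat a_i]=i_*\big(\xi_\phi([a_i])\big)$ in $H_1(\widehat\Sigma)$. Now fix $\phi\in K$. On the $i_*H_1(\Sigma)$ part of the basis above $\widehat\phi_*$ is automatically the identity, so $\phi\in\mathcal{I}(\Sigma)$ exactly when $i_*\big(\xi_\phi([a_i])\big)=0$ for every $i$, equivalently, by injectivity of $i_*$, when $\xi_\phi([a_i])=0$ for every $i$. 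On the other hand $\xi_\phi=0$ exactly when $\xi_\phi$ vanishes on $\operatorname{im}j_*$ — which is automatic for $\phi\in K$ because $\xi_\phi\circ j_*=\phi_*-\mathrm{id}=0$ — and on each $[a_i]$. These two conditions coincide, so $\mathcal{I}(\Sigma)=\mathcal{I}'(\Sigma)$.

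The main obstacle, I expect, will be the two homological bookkeeping steps — injectivity of $i_*\colon H_1(\Sigma)\to H_1(\widehat\Sigma)$ and the claim that $[\widehat a_1],\dots,[\widehat a_{b-1}]$ complete $i_*H_1(\Sigma)$ to a $\mathbb{Z}$-basis — where one has to track the single relation $\sum_i[\partial_i]=0$ among the boundary classes (in $H_1(\Sigma)$, in $H_1(\Sigma_{0,b+1})$, and in $H_1(\widehat\Sigma)$) and fix orientations of the $a_i$ and $\delta_i$ consistently. Once the identities $\xi_\phi\circ j_*=\phi_*-\mathrm{id}$ and $\widehat\phi(\widehat a_i)-\widehat a_i=\phi(a_i)-a_i$ are in place, everything else is formal; the degenerate case $b=1$ (no arcs $a_i$, $j_*$ surjective, $\widehat\Sigma\simeq\Sigma$) makes both groups coincide with the kernel of the action on $H_1(\Sigma)$, in agreement with the general argument.
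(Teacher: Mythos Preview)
Your proof is correct and follows essentially the same approach as the paper: complete the arcs $a_i$ to loops $\widehat a_i$ in $\widehat\Sigma$, use the chain-level identity $\widehat\phi(\widehat a_i)-\widehat a_i=\phi(a_i)-a_i$ together with injectivity of $i_*\colon H_1(\Sigma)\to H_1(\widehat\Sigma)$, and check the remaining basis elements via the relation $\xi_\phi\circ j_*=\phi_*-\mathrm{id}$. The only cosmetic differences are that the paper chooses its closed-curve basis from a standard diagram $(\Sigma;\alpha,\beta)$ and uses a chain pattern for the arcs ($\partial^-a_i\in b_i$, $\partial^+a_i\in b_{i+1}$) rather than your star pattern, and does not isolate the intermediate subgroup $K$ explicitly.
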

\begin{proof}
Let $(\Sigma; \alpha, \beta)$ be a standard diagram of $\Sigma_{0, b} \times I$. We can take a collection $a$ of $b-1$ arcs disjoint from $\alpha$, $\beta$ such that $\partial^- a_i \in b_i$ and $\partial^+ a_i \in b_{i+1}$, where $\{b_i\}$ is boundary components of $\Sigma$. The collections $\{[\alpha_i], [\beta_i], [a_j]\}$ and $\{[\alpha_i], [\beta_i], [b_j]\}_{1 \le j \le b-1}$ form bases of $H_1(\Sigma, \partial \Sigma)$ and $H_1(\Sigma)$, respectively. We take a collection $c$ of $b-1$ arcs on $\Sigma_{0, b+1}$ such that $\partial c = \partial a$ in $\partial \Sigma$. The collection $\{ [\alpha_i], [\beta_i], [a_j - c_j], [b_j]\}_{1 \le j \le b-1}$ forms a symplectic basis of $H(\Sigma)$. The homomorphism $i_* : H_1(\Sigma) \to H(\Sigma)$ induced by the natural inclusion $i : \Sigma \to \widehat{\Sigma}$ is injective.

We have $\phi_* ([a_j -c_j]) = [\phi(a_j)-c_j]$, $[\phi(a_j) -c_j] - [a_j - c_j] = i_*([\phi(a_j)-a_j])$ and $\phi_*([b_j]) = [b_j]$ in $H(\Sigma)$ for $\phi \in \mathcal{M}(\Sigma)$. For $\phi \in \mathcal{I}'(\Sigma)$, we get $\phi_* ([a_j - c_j]) - [a_j - c_j] = 0$ in $H(\Sigma)$, and then $\phi \in \mathcal{I}(\Sigma)$. Conversely, we get $i_*([\phi(a_j) - a_j]) = 0$ for $\phi \in \mathcal{I}(\Sigma)$, and then $\phi \in \mathcal{I}'(\Sigma)$.
\end{proof}

We can obtain the following corollary corresponding to the Corollary 3.12 in \cite{L-C1}.

\begin{corollary}
Let $X$, $Y$ be smooth, oriented 4-manifolds with connected boundaries such that $Q_X \cong Q_Y$. Suppose that $H_1(X) = H_1(\partial X) = H_1(Y) = H_1(\partial Y) = 0$ and they admit $(g, (l, l, k); p, b)$-trisection $\mathcal{T}_X$, $\mathcal{T}_Y$ such that the actions of induced monodromies are the same. Then, there exists a diagram $(\Sigma; \alpha, \beta, \gamma)$ for $\mathcal{T}_X$ and an element $\rho \in \mathcal{I}(\Sigma)$ such that $(\Sigma; \alpha, \beta, \rho(\gamma))$ is a diagram for $\mathcal{T}_Y$.
\end{corollary}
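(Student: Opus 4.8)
The plan is to use the main theorem to bring $\mathcal{T}_X$ and $\mathcal{T}_Y$ into normal form, align their standard $(\alpha,\beta)$-pieces, observe that the $\gamma$-curves then carry the same homology classes, and finally realize the change from one $\gamma$-system to the other by an element of $\mathcal{I}(\Sigma)$; this mirrors the deduction of the Torelli statement from the normal form in \cite{FKSZ1} and \cite{L-C1}. First I would apply the main theorem to $\mathcal{T}_X$ and to $\mathcal{T}_Y$ (the hypotheses $H_1 = H_1(\partial) = 0$ hold for each), obtaining normal-form diagrams $(\Sigma;\alpha^X,\beta^X,\gamma^X)$, $(\Sigma;\alpha^Y,\beta^Y,\gamma^Y)$ with curve systems $\eta^X$, $\eta^Y$. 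Since the parameters $(g,(l,l,k);p,b)$ agree, both central surfaces are $\Sigma_{g,b}$; fix an identification and write $\Sigma$ for the common surface. Both $(\Sigma;\alpha^X,\beta^X)$ and $(\Sigma;\alpha^Y,\beta^Y)$ are standard diagrams of $\Sigma_{p,b}\times I$, so by Proposition 3.2 and Corollary 3.3 each is diffeomorphism and handle slide equivalent to the single model $(\Sigma;\delta^l,\epsilon^l)$; applying a suitable diffeomorphism to the $\mathcal{T}_Y$-diagram I arrange $\alpha^X=\alpha^Y=:\alpha$ and $\beta^X=\beta^Y=:\beta$. The handle slides involved change the basis $\{[\beta_j]\}$, but --- exactly as in the proof of Lemma 4.2 --- slides on $\beta$ can be matched by slides on $\alpha$ keeping $(\Sigma;\alpha,\beta)$ standard and then absorbed into a redefinition of $\gamma$ that preserves the normal form; similarly the auxiliary arcs $a$, lying in the piece $P$ of the splitting $\Sigma=\Sigma'\cup P$ disjoint from $\alpha,\beta$, and the curves $\eta$ can be taken common to the two diagrams. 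In particular the binders $\Sigma_\alpha$ of the two induced open books are canonically identified.

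Next I would match homology. The main theorem gives $[\gamma^X_i]=-[\alpha_i]-\sum_j \widetilde{Q}^X_{ji}[\beta_j]-d_i[\eta_i]$ with $\widetilde{Q}^X=A_{\psi_X}^{-1}\oplus Q_X\oplus\langle 0\rangle^{k-l}$, and the same for $Y$. The hypothesis that the monodromy actions coincide gives $A_{\psi_X}=A_{\psi_Y}$ with respect to the common arc basis $\{[a_j]\}$, and $Q_X\cong Q_Y$ lets me take $Q_X=Q_Y$ after a change of basis, which is again a sequence of handle slides on $\gamma$ compensated on the $(\alpha,\beta)$-piece without leaving the normal form. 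Thus $\widetilde{Q}^X=\widetilde{Q}^Y$, hence $[\gamma^X_i]=[\gamma^Y_i]$ in $H_1(\Sigma)$ for all $i$, and --- by the injectivity of $H_1(\Sigma)\to H(\Sigma)$ noted in the proof of Lemma 5.1 --- also in $H(\Sigma)$.

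The heart of the argument is to produce $\rho\in\mathcal{I}(\Sigma)$ with $(\Sigma;\alpha,\beta,\rho(\gamma^X))$ equivalent to $(\Sigma;\alpha,\beta,\gamma^Y)$. Here I would follow the proof of Theorem 4.4 in \cite{FKSZ1}: by Corollary 3.3 the systems $\gamma^X$ and $\gamma^Y$ are both geometrically standard for a $C_3$-type compression body, hence related to each other by a diffeomorphism of $\Sigma$ together with handle slides; using the rigidity of the subsurface $\Sigma'$ carrying $\alpha,\beta$, this diffeomorphism can be chosen to fix $\alpha$ and $\beta$ up to handle slides, and correcting by those slides yields a mapping class $\rho$ with $\rho(\gamma^X)$ equivalent to $\gamma^Y$, so that $(\Sigma;\alpha,\beta,\rho(\gamma^X))$ is a diagram for $\mathcal{T}_Y$. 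To see $\rho\in\mathcal{I}(\Sigma)$, by Lemma 5.1 it suffices to check $\xi_\rho=0$: this is immediate on $[\alpha_i]$ and $[\beta_i]$ since $\rho$ preserves $\alpha$ and $\beta$, and on the arc classes $[a_j]$ it follows from the computation in the proof of the main theorem, which expresses the arc action --- equivalently the top-left block $A_\psi$ of $\alpha\cdot\gamma$ --- purely in terms of $\alpha$ and the $\gamma$-curves, these being the same for the two diagrams by the hypothesis on monodromies.

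I expect the last step to be the main obstacle. In the closed case of \cite{FKSZ1} the curves $\alpha,\beta$ already form a symplectic basis of $H_1(\Sigma)$ and extracting the Torelli element is essentially bookkeeping; in the relative case one must carry the arc system and the induced monodromy through every handle-slide adjustment and, above all, arrange that the diffeomorphism matching $\gamma^X$ with $\gamma^Y$ respects the standard $(\alpha,\beta)$-piece, so that $X$ and $Y$ genuinely differ only by cutting and regluing along $\gamma$. This is precisely where the hypotheses $H_1(X)=H_1(\partial X)=0$ (and likewise for $Y$) and the decomposition $\Sigma=\Sigma'\cup P$ from the proof of Lemma 4.2 are used.
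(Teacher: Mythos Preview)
Your opening moves --- applying the main theorem to both trisections, aligning the standard pairs $(\alpha,\beta)$, the arcs $a$, and the curves $\eta$, and concluding $[\gamma^X_i]=[\gamma^Y_i]$ in $H_1(\Sigma)$ --- match the paper's proof. The divergence, and the genuine gap, is in how you build $\rho$ and verify $\rho\in\mathcal{I}(\Sigma)$.

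You try to produce a $\rho$ that \emph{fixes} $\alpha$ and $\beta$ while sending $\gamma^X$ to something slide-equivalent to $\gamma^Y$, invoking an unspecified ``rigidity of $\Sigma'$.'' This is not justified: the $\gamma$-curves meet the arcs $a$ (indeed $\langle[a_i],[\gamma_j]\rangle_\Sigma=-d_j\delta_{ij}$ from the normal form), so they do not lie in $\Sigma'$, and a diffeomorphism carrying $\gamma^X$ to $\gamma^Y$ has no reason to respect the $\alpha,\beta$-pair even up to slides. The paper takes a different route, following Proposition~2.1 of \cite{L-C1}. It first runs the monodromy algorithm (Proposition~3.4) to obtain arc systems $a^3,\hat a^3$ with $[\hat a^3_i-a^3_i]=0$ and slide-equivalent $\gamma$-systems $\gamma',\hat\gamma'$ disjoint from them; then it extends $\gamma'$ and $\hat\gamma'$ to \emph{geometric symplectic bases} $\{\gamma',g'\}$ and $\{\hat\gamma',\hat g'\}$ of the complementary subsurfaces with $[g'_i]=[\hat g'_i]$; and finally it takes $\rho$ to be the change-of-coordinates map sending $\gamma'_i\mapsto\hat\gamma'_i$, $g'_i\mapsto\hat g'_i$, $a^3_j\mapsto\hat a^3_j$. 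No attempt is made to fix $\alpha$ or $\beta$: the diagram $(\Sigma;\alpha,\beta,\rho(\gamma'))=(\Sigma;\alpha,\beta,\hat\gamma')$ is a diagram for $\mathcal{T}_Y$ simply because $\hat\gamma'$ already is.

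Your verification of $\xi_\rho=0$ on the arc classes is also flawed: you appeal to the main theorem's computation, but that computes $\xi_\psi$ for the \emph{monodromy} $\psi$, not $\xi_\rho$. Knowing $A_{\psi_X}=A_{\psi_Y}$ says nothing about $[\rho(a_j)-a_j]$ unless you control where $\rho$ sends the arcs --- which your construction does not (if $\rho$ is only constrained on $\alpha,\beta,\gamma$, it can act arbitrarily on $P$). The paper's construction builds this control in: since $\rho(a^3_j)=\hat a^3_j$ with $[\hat a^3_j-a^3_j]=0$, and since $\{\gamma'_i,g'_i,a^3_j\}$ represents a generating set for $H_1(\Sigma,\partial\Sigma)$ on which $\xi_\rho$ vanishes by construction, Lemma~5.1 applies directly. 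The missing ingredient in your sketch is precisely this joint use of the arc systems $a^3,\hat a^3$ from the monodromy algorithm together with the geometric-symplectic-basis extension of \cite{L-C1}.
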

\begin{proof}
Applying the Theorem to $\mathcal{T}_X$ and $\mathcal{T}_Y$ for the same $(\Sigma; \alpha, \beta)$, $a$ and $\eta$, we can obtain diagrams $(\Sigma; \alpha, \beta, \gamma)$ and $(\Sigma; \alpha, \beta, \hat{\gamma})$ for $\mathcal{T}_X$ and $\mathcal{T}_Y$, respectively. Moreover, we get collections $a^3$, $\hat{a}^3$ of arcs and colllections $\widetilde{\gamma}$, $\widetilde{\hat{\gamma}}$ of curves satisfying $ [\hat{a}^3_i - a^3_i] = 0$ in $H_1(\Sigma)$ and $\widetilde{\gamma} \cap a^3 = \widetilde{\hat{\gamma}} \cap \hat{a}^3 = \emptyset$ in the algorithm to calculate the monodromy. Let $P$ be the matrix such that $[\widetilde{\gamma}_i] = \sum P_{ji} [\gamma_j]$. Since $(\widetilde{\gamma} \cup \beta ) \cap a^3 = \emptyset$, $\Sigma$ split into surfaces $\Sigma'$ and $P$ such that $(\widetilde{\gamma} \cup \beta ) \subset \Sigma'$, $a^3 \subset P$. We perform handle slides regarding $\widetilde{\gamma}$ in $\Sigma'$ to get a new $\gamma'$ satisfying $[\gamma'_i] = \sum P^{-1}_{ji} [\widetilde{\gamma}_j]$. Note that we have $[\gamma'_i] = [\gamma_i]$ and $\gamma' \cap a^3 =\emptyset$. 

Applying the same argument to $(\Sigma; \alpha, \beta, \hat{\gamma})$, we obtain $\hat{\gamma}'$. Diagrams $(\Sigma; \alpha, \beta, \gamma')$ and $(\Sigma; \alpha, \beta, \hat{\gamma}')$ satisfy the conditions of the Theorem. In the same way as the Proposition 2.1 in \cite{L-C1}, by using $a^3$ and $\hat{a}^3$ additionally, $\gamma'$ and $\hat{\gamma}'$ can be extended to geometric symplectic bases $\{\gamma', g'\}$ and $\{\hat{\gamma}', \hat{g}'\}$ in $\Sigma'$ and $\widehat{\Sigma}'$, respectively, such that $[g'_i] = [\hat{g}'_i]$ in $H_1(\Sigma)$. We can take a homeomorphism $\rho$ of $\Sigma$ which sends $\gamma'_i$ to $\hat{\gamma}'_i$, $g'_i$ to $\hat{g}'_i$ and $a^3_j$ to $\hat{a}^3_j$. It is clear that $\xi_{\rho}(x) = 0$ for all $x \in H_1(\Sigma, \partial \Sigma)$, and then $\rho \in \mathcal{I}(\Sigma)$.
\end{proof}

We relate this corollary to exotic 4-manifolds. We call a connected, oriented compact 4-manifold $X$ {\it 2-handlebody} if $X$ has a handle decomposition without 3-, 4-handles. Any 2-handlebody $X$ admits a $(g, l; p,b)$-relative trisection from the following propositions:

\begin{proposition}[Theorem 2.1 in \cite{Har1}]
Any 2-handlebody $X$ admits an achiral Lefshcetz fibration over $D^2$ with bounded fibers.
\end{proposition}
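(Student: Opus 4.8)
\noindent\textit{Proof idea.}
The plan is to build the fibration handle by handle, following a handle decomposition of $X$ with a single $0$-handle (recall $X$ is connected). Write the union of the $0$- and $1$-handles as $\natural^{k}(S^1\times D^3)$. This piece already fibers over $D^2$ with bounded fibers and no critical points: identifying $S^1\times D^3$ with $(S^1\times D^1)\times D^2$ and projecting to the last factor exhibits it as an annulus bundle over $D^2$, and taking boundary connected sums yields a fibration of $\natural^{k}(S^1\times D^3)$ over $D^2$ whose fiber is the planar surface $\Sigma_{0,k+1}$. Restricting to the boundary gives the standard open book on $\#^{k}(S^1\times S^2)$ with planar page and trivial monodromy.

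Next I would attach the $2$-handles one at a time, maintaining the inductive hypothesis that after attaching some of them we have an achiral Lefschetz fibration $\pi\colon Y\to D^2$ with bounded fiber $F$, inducing on $\partial Y$ an open book with page $F$ and monodromy a product of positive and negative Dehn twists. A further $2$-handle is attached along a framed knot $(K,n)$ in $\partial Y$. The key step is to isotope $K$ into a single page $F_0$ of this open book. After sufficiently many positive and negative stabilizations of the open book --- each of which amounts to attaching a canceling $1$-handle/$2$-handle pair to $Y$, hence leaves $Y$ unchanged up to diffeomorphism while enlarging the fiber by a $1$-handle and introducing one extra Lefschetz critical point --- one can arrange $K$ to be embedded in a page. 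This is exactly where achirality is used: both signs of stabilization must be allowed so that $K$ can be pushed into a page irrespective of how it winds through the pages.

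Once $K\subset F_0$, I would compare the framing $n$ with the page framing $f$ of $K$. If $n=f-1$ the handle attachment extends $\pi$ by a single positive Lefschetz critical point with vanishing cycle $K$; symmetrically $n=f+1$ produces a negative one; and if $n$ equals the page framing the handle merely attaches a $2$-dimensional $1$-handle to the fiber and extends $\pi$ with no new critical value. For a general offset $m=n-f$, I would perform $|m|$ further stabilizations together with handle slides of $K$ over the new handles, each slide changing the effective page framing of $K$ by $\pm 1$ at the cost of one more positively or negatively twisted critical point, until the offset becomes $-1$, and then close up with the corresponding Lefschetz critical point. Carrying this out for every $2$-handle in turn produces an achiral Lefschetz fibration of $X$ over $D^2$; the fiber stays a compact surface with nonempty boundary throughout, since no move ever caps off a boundary component of $F$, so the fibration has bounded fibers.

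The main obstacle is the middle step: isotoping an arbitrary framed attaching circle in the boundary of the partially assembled total space into a page of the prescribed open book while bookkeeping the induced framing change, and verifying that the stabilizations used to accomplish this are realized by canceling handle pairs, so that $X$ itself is not altered. Making this precise, together with tracking the signs of the Lefschetz critical points that appear, is the technical heart of the argument; the remainder is the handle-by-handle induction starting from the annulus-bundle structure on the $1$-handlebody.
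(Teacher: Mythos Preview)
The paper does not supply its own proof of this proposition; it is quoted as Theorem~2.1 of Harer's thesis and used as a black box. So there is no ``paper's proof'' to compare against, and any assessment must be of your sketch on its own terms.

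Your outline is essentially the standard Harer argument (later reworked by Akbulut--Ozbagci and others): fiber the $0$- and $1$-handles trivially, then absorb each $2$-handle by first braiding its attaching circle into a single page via positive and negative Hopf stabilizations, then adjusting the discrepancy between the given framing and the page framing by further stabilizations, and finally realizing the handle attachment as a Lefschetz (or anti-Lefschetz) critical point. The identification of stabilizations with cancelling $1$-/$2$-handle pairs, and the recognition that allowing both signs is exactly what buys you achirality, are both correct and are the conceptual core.

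One point deserves correction. The case $n=f$ does \emph{not} ``merely attach a $2$-dimensional $1$-handle to the fiber.'' Attaching a $4$-dimensional $2$-handle along $K\subset F_0$ with the page framing does not extend the bundle structure in that way; enlarging the fiber by a $1$-handle is what a stabilization (a cancelling $1$-/$2$-handle pair) does, not a lone $2$-handle. In practice this case is handled exactly as you describe the general offset: perform one more stabilization to make the offset $\pm 1$ and then introduce a single Lefschetz critical point of the appropriate sign. Similarly, ``until the offset becomes $-1$'' should read ``until the offset becomes $\pm 1$,'' with the sign chosen so that the number of extra stabilizations is $|m|-1$ rather than $|m|+1$ when $m\neq 0$. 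With these adjustments your sketch is a faithful summary of Harer's construction.
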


\begin{proposition}[Corollary 18 in \cite{CGPC1}]
Let $\pi : X \to D^2$ be an achiral Lefschetz fibration with regular fiber $\Sigma_{p,b}$. The manifold $X$ admits a $(g, l; p, b)$-trisection.
\end{proposition}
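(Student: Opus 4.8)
The plan is to construct the relative trisection directly from the fibration, using a three-sector decomposition of the base disk and reading the vanishing cycles of $\pi$ as the data of the three compression bodies. First I would put $\pi$ into a standard handle form: after isotoping the critical values, present $X$ as $\Sigma_{p,b} \times D^2$ with one $2$-handle attached along each vanishing cycle $c_j \subset \Sigma_{p,b} \times \{ \mathrm{pt} \}$, the framing being $\mp 1$ according to the chirality of the corresponding critical point. This is the usual handle picture of an achiral Lefschetz fibration with bounded fiber, and it is the starting point furnished by Proposition 5.3.

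Next I would subdivide the base. Write $D^2 = R_1 \cup R_2 \cup R_3$ as three closed sectors meeting along three radial edges and at the central point $0$, and set $X_i = \pi^{-1}(R_i)$. Over a sector carrying no critical value the fibration is trivial, so $\pi^{-1}(R_i) \cong \Sigma_{p,b} \times D^2 \cong \natural^{2p+b-1} S^1 \times D^3 = \natural^l S^1 \times D^3$, which is exactly the model required by axiom iii); over an edge the preimage is $\Sigma_{p,b} \times I$ with $2$-handles attached along the vanishing cycles sitting on that edge, and attaching $2$-handles to a product cobordism is precisely how one builds a relative compression body $C_{g,p;b}$, which is what axiom ii) demands of the double intersections $X_i \cap X_{i+1}$.

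The genus and index bookkeeping is where the real work lies. The naive central surface $\pi^{-1}(0)$ is only $\Sigma_{p,b}$, whereas the trisection requires a central surface $\Sigma_{g,b}$ with $g \ge p$; the extra genus has to be produced by opening the fiber along the vanishing cycles, so that each of the three compression bodies cuts one common stabilized surface $\Sigma_{g,b}$ down to the page $\Sigma_{p,b}$ with the same number $g-p$ of cut curves. To make the three edge-counts agree, and to keep each sector preimage a genuine $1$-handlebody, I would stabilize the fibration, inserting canceling Lefschetz pairs and boundary-parallel tubes in the fiber until the vanishing cycles distribute into three cut systems of equal size. I expect this to be the principal obstacle: a genuine $2$-handle attached to $\Sigma_{p,b} \times D^2$ destroys the $1$-handlebody property, so the critical points must be arranged to lie on the seams, where they legitimately assemble into compression bodies, rather than in the interior of a sector, and one must check that after all stabilizations each $X_i$ is still diffeomorphic to $\natural^l S^1 \times D^3$ and that the three compression bodies share one and the same central surface.

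Finally I would verify the boundary axioms iv) and v). Restricting $\pi$ to $\partial X$ yields the open book decomposition of $\partial X$ with page the fiber, so $X_i \cap \partial X \cong \Sigma_{p,b} \times I$, which is axiom iv). For axiom v), with $k_i = l$ the summand $\sharp^{k_i - l} S^1 \times S^2$ is empty, and the two compression bodies adjacent to a given sector both reduce $\Sigma_{g,b}$ to $\Sigma_{p,b}$, so their union $(X_i \cap X_{i+1}) \cup_{\Sigma} -(X_{i-1} \cap X_i)$ is a sutured Heegaard splitting of $\Sigma_{p,b} \times I$; Proposition 3.2 then confirms that this splitting is standard. Assembling these verifications produces the desired $(g, l; p, b)$-relative trisection.
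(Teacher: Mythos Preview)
The paper does not prove this proposition at all: it is stated as Corollary~18 of \cite{CGPC1} and used as a black box in the proof of Corollary~5.5. There is therefore no ``paper's own proof'' to compare your proposal against.

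That said, your outline is broadly in the spirit of the construction in \cite{CGPC1}: one does partition the base disk into three wedge-shaped sectors and read the compression bodies off the vanishing cycles over the radial seams. Where your sketch is loosest is the stabilization step. You correctly identify that the naive central fiber $\pi^{-1}(0)\cong\Sigma_{p,b}$ has the wrong genus and that the critical values must be pushed onto the seams rather than left in sector interiors, but ``inserting canceling Lefschetz pairs and boundary-parallel tubes'' until the three seam-counts agree is not quite what is done. In \cite{CGPC1} the central surface is obtained by stabilizing the fiber once for each vanishing cycle (so $g=p+n$ where $n$ is the number of critical points), and the three cut systems are built so that one of them records all the vanishing cycles while the other two are arranged to interact with them in a standard way; one does not need to equidistribute the Lefschetz singularities among three seams. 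Your version would require adding extraneous canceling pairs to balance the counts, which is unnecessary and would inflate $g$. If you want to reconstruct the actual argument, the key is to view the $n$ vanishing cycles as the $\gamma$-curves on a stabilized page and to manufacture $\alpha$ and $\beta$ as the two standard cut systems coming from the stabilization tubes; the resulting $(\Sigma;\alpha,\beta)$ is then automatically a standard diagram of $\Sigma_{p,b}\times I$, which is exactly the shape the present paper exploits downstream.
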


Let $X$, $Y$ be 2-handlebodies whose boundaries are diffeomorphic to the same integral homology 3-sphere. Any two open book decompositions of an integral homology 3-sphere are isotopic after several Hopf stabilizations \cite{GG1}. According to \cite{Cas1}, we can perform the relative stabilization corresponding to the Hopf stabilization of the induced open book decomposition. The integers $l$ and $k_i$ increase by $1$ after this stabilization. Therefore, $X$ and $Y$ admit $(g, l; p, b)$-trisection $\mathcal{T}_X$ and $(g', l; p, b)$-trisection $\mathcal{T}_Y$, respectively, such that they induce the same open book decomposition of boundary. We can obtain the following corollary regarding 2-handlebody.

\begin{corollary}
Let $X$, $Y$ be 2-handlebodies satisfying $H_1(X) = H_1(\partial X) = H_1(Y) = H_1(\partial Y) = 0$. If $X$ is homeomorphic to $Y$, there are $(g, l; p, b)$-trisection diagrams $(\Sigma; \alpha, \beta, \gamma)$ of $X$ and  an element $\rho \in \mathcal{I}(\Sigma)$ such that $(\Sigma; \alpha, \beta, \rho(\gamma))$ is a diagram of $Y$.
\end{corollary}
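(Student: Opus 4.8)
The plan is to reduce the statement to Corollary 5.2, after arranging that $X$ and $Y$ carry relative trisections with the same surface data and the same boundary monodromy action. First I would extract the consequences of a homeomorphism $h\colon X\to Y$. Restricting $h$ to the boundary and invoking the uniqueness of smooth structures on $3$-manifolds, $\partial X$ and $\partial Y$ are orientation-preservingly diffeomorphic; since $H_1(\partial X)=0$, both are integral homology $3$-spheres, and I set $M:=\partial X=\partial Y$. The map $h$ also identifies $H_2(X)$ with $H_2(Y)$ compatibly with the intersection pairings, which are unimodular because $H_1(\partial X)=H_1(\partial Y)=0$; hence $Q_X\cong Q_Y$, and in particular $b_2(X)=b_2(Y)$.

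Next I would build and then normalize the trisections. By Proposition 5.3 each of $X$ and $Y$ admits an achiral Lefschetz fibration over $D^2$ with bounded fibers, and Proposition 5.4 turns these into relative $(g,l;p,b)$- and $(g',l';p',b')$-trisections $\mathcal{T}_X$ and $\mathcal{T}_Y$, each inducing an open book decomposition of $M$. By \cite{GG1} the two open books of the homology sphere $M$ become isotopic after finitely many positive Hopf stabilizations, and by \cite{Cas1} every such Hopf stabilization of the induced open book is realized by a relative stabilization of the corresponding trisection (raising $l$ and each $k_i$ by $1$ while preserving the diffeomorphism type of the $4$-manifold). Performing the appropriate relative stabilizations on each side, I may assume $\mathcal{T}_X$ and $\mathcal{T}_Y$ induce the same open book of $M$; in particular they acquire a common page $\Sigma_{p,b}$, hence a common $l=2p+b-1$, and a common monodromy, so their monodromy actions $\xi_\psi$ coincide. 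Since both $\mathcal{T}_X$ and $\mathcal{T}_Y$ are now $(g,l;p,b)$-trisections of manifolds with vanishing $H_1$ and vanishing first homology of the boundary, the Theorem applies on each side: it forces $g-p=l+b_2(X)$ for $\mathcal{T}_X$ and $g'-p=l+b_2(Y)$ for $\mathcal{T}_Y$, so $b_2(X)=b_2(Y)$ makes the two trisection genera agree.

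At this point $X$ and $Y$ satisfy every hypothesis of Corollary 5.2: isomorphic (unimodular) intersection forms, vanishing of the relevant first homology groups, and $(g,(l,l,l);p,b)$-trisections whose induced monodromy actions are equal. Applying Corollary 5.2 yields a diagram $(\Sigma;\alpha,\beta,\gamma)$ of $\mathcal{T}_X$ together with an element $\rho\in\mathcal{I}(\Sigma)$ such that $(\Sigma;\alpha,\beta,\rho(\gamma))$ is a diagram of $\mathcal{T}_Y$. Because $\mathcal{T}_X$ is a trisection of $X$ and $\mathcal{T}_Y$ is a trisection of $Y$, these are trisection diagrams of $X$ and $Y$ themselves, which is precisely the claim.

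The step I expect to be the main obstacle is the normalization in the second paragraph: Harer's bounded fibrations on $X$ and $Y$ have, a priori, unrelated page genera and boundary counts, so one must verify that iterated Hopf stabilization — tracked faithfully on the trisection side through \cite{Cas1} — can be steered to a common page $\Sigma_{p,b}$ on both manifolds without altering their diffeomorphism types. Once the page is fixed, the identities $l=2p+b-1$ and $g-p=l+b_2(X)$ force the remaining parameters $l$ and $g$ to match automatically, and everything else is a direct appeal to the quoted results.
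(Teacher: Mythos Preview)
Your proposal is correct and follows essentially the same route as the paper: normalize via Propositions~5.3--5.4, Giroux--Goodman, and Castro's relative stabilization to obtain trisections inducing the same open book (hence common $(p,b)$, $l$, and monodromy action), use the dimension count $g-p=l+b_2$ coming from the Theorem with $k=l$ together with $b_2(X)=b_2(Y)$ to force $g=g'$, and then invoke Corollary~5.2. The paper relegates the normalization step to the discussion immediately preceding the corollary rather than to the proof itself, but the content and order of the argument are otherwise identical to yours.
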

\begin{proof}
From the assumptions, there are $(g, l; p, b)$-trisection $\mathcal{T}_X$ and $(g', l; p, b)$-trisection $\mathcal{T}_Y$ which induce the same open book decomposition. Applying the theorem to $\mathcal{T}_X$, we obtain a diagram $(\Sigma; \alpha, \beta, \gamma)$. Since $k=l$, we have $g-p = l+b_2$. Therefore, $g$ is determined by $(p,b)$ and $b_2$, and then $g'$ must be equal to $g$. We should use the Corollary 5.2 to finish the proof.
\end{proof}

Using this Corollary, we can obtain the candidates for exotic manifolds to a 2-handlebody $X$. Moreover, we can apply this to geometrically simply connected closed 4-manifold. 

\begin{corollary}
Let $X$, $Y$ be oriented, geometrically simply connected, closed 4-manifolds. If $X$ is homeomorphic to $Y$, there are $(g, l; p, b)$-trisection diagram $(\Sigma; \alpha, \beta, \gamma)$ of $X \setminus D^4$ and an element $\rho \in \mathcal{I}(\Sigma)$ such that $(\Sigma; \alpha, \beta, \rho(\gamma))$ is a diagram of $Y \setminus D^4$.
\end{corollary}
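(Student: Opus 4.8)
The plan is to reduce the statement directly to Corollary 5.5, applied to the punctured manifolds $X \setminus D^4$ and $Y \setminus D^4$ (a closed manifold is handled through its once-punctured, bounded version so that the relative machinery of the paper applies); so the task is to verify that this pair meets the hypotheses of that corollary. The one substantive point is that $X \setminus D^4$ and $Y \setminus D^4$ are $2$-handlebodies. Since $X$ is geometrically simply connected, it admits a handle decomposition with no $1$-handles; turning this decomposition upside down yields a handle decomposition of the closed manifold $X$ with one $0$-handle, one $4$-handle, and no $3$-handles. Removing the $4$-handle, $X \setminus D^4$ acquires a handle decomposition with neither $3$- nor $4$-handles, hence is a $2$-handlebody; the same applies to $Y \setminus D^4$. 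It is used here that a $2$-handlebody in the sense of this paper is permitted to carry $1$-handles, so that the dual ``no $3$-handle'' decomposition is already enough.

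Next I would record the homological and homeomorphism hypotheses. As $X$ is geometrically simply connected it is simply connected, so $H_1(X) = 0$; removing an open $4$-ball changes neither $\pi_1$ nor $H_1$, and $\partial(X \setminus D^4) = S^3$, so $H_1(X \setminus D^4) = H_1(\partial(X \setminus D^4)) = 0$, and likewise for $Y$. A homeomorphism $X \to Y$ can, after an ambient isotopy, be taken to carry a fixed embedded $4$-ball of $X$ onto one of $Y$ --- any two locally flat embedded $4$-balls in a connected $4$-manifold being ambient isotopic --- so it restricts to a homeomorphism between $X \setminus D^4$ and $Y \setminus D^4$. With all hypotheses of Corollary 5.5 in place for the pair $(X \setminus D^4,\, Y \setminus D^4)$, that corollary furnishes a $(g, l; p, b)$-trisection diagram $(\Sigma; \alpha, \beta, \gamma)$ of $X \setminus D^4$ together with $\rho \in \mathcal{I}(\Sigma)$ such that $(\Sigma; \alpha, \beta, \rho(\gamma))$ is a diagram of $Y \setminus D^4$, which is exactly the claim. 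The internal matching of the two trisection genera, of the induced open books on $S^3$, and the isomorphism $Q_{X \setminus D^4} \cong Q_{Y \setminus D^4}$ are all carried out inside Corollary 5.5, using $b_2(X \setminus D^4) = b_2(Y \setminus D^4)$ and the fact that open book decompositions of $S^3$ become isotopic after finitely many Hopf stabilizations.

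I expect the main obstacle to be the identification of $X \setminus D^4$ as a $2$-handlebody: once one observes that geometric simple connectivity, read through the upside-down handle decomposition, already provides a handle structure of $X$ with no $3$-handles, the reduction to Corollary 5.5 is immediate, and the remaining technical step --- that a homeomorphism $X \to Y$ descends to the punctured manifolds --- is entirely standard.
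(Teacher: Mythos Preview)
Your proposal is correct and follows essentially the same approach as the paper: both reduce directly to Corollary~5.5 by observing that $X\setminus D^4$ and $Y\setminus D^4$ are homeomorphic $2$-handlebodies satisfying the required homological vanishing. The paper's own proof is only two sentences and omits the details you supply (the upside-down handle argument, the descent of the homeomorphism to the punctured manifolds), so your write-up is simply a more careful version of the same reduction.
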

\begin{proof}
$X \setminus D^4$, $Y \setminus D^4$ can be homeomorphic 2-handlebodies. It is clear that they satisfy the other assumptions of the Corollary 5.5.
\end{proof}

\appendix

\section{Johnson kernel of $\Sigma$}
For a surface $S$ with one boundary component,  Johnson defined the Johnson homomorphism $\tau_S : \mathcal{I}(S) \to (H_1(S))^* \otimes \bigwedge^2 H_1(S)$ and proved that the image of this map is $\bigwedge^3 H_1(S)$ ($\subset  (H_1(S))^* \otimes \bigwedge^2 H_1(S)$) in \cite{Joh1}. Moreover, Johnson also proved in \cite{Joh2} that the Johnson kernel $\mathcal{K}(S)$, or the kernel of $\tau_S$, is generated by Dehn twists on separating curves.

For $\Sigma \hookrightarrow \widehat{\Sigma}$, restricting $\tau_{\widehat{\Sigma}}$ to $\mathcal{I}(\Sigma)$, we can obtain the Johnson homomorphism $\tau_{\Sigma}$ \cite{Put1}. This is equivalent to the definition obtained by using non-separating partition of $\Sigma$ in \cite{Chu1}. It is followed from the Theorem 5.9 in \cite{Chu1} that the image of $\tau_{\Sigma}$ is $\bigwedge^3 H_1(\Sigma)$. 

For this $\tau_{\Sigma}$, the Johnson kernel $\mathcal{K}(\Sigma)$ is equivalent to $\mathcal{K}(\Sigma, \widehat{\Sigma})$ of \cite{Put1}. Note that a curve $c$ in $\Sigma$ is separating if $[c] = 0$ in $H_1(\Sigma)$. According to the Theorem 5.1 in \cite{Put1}, if the genus of $\Sigma$ is at least $2$, the group $\mathcal{K}(\Sigma)$ is generated by Dehn twists on separating curves.

Before applying the Johnson kernel to relative trisection diagram, we see other versions of the theorem in $(l, k, l)$ and $(k, l, l)$ cases.
\begin{theorem*}
Suppose $H_1(X) = H_1(\partial X) = 0$, and $X$ has a $(g, (l, k, l); p, b)$-relative trisection $\mathcal{T}_X$. Then, $\mathcal{T}_X$ admits a diagram $(\Sigma; \alpha, \beta, \gamma)$, and there exists a collection $\eta$ which consists of $l$ simple closed curves in $\Sigma$ such that:
\begin{enumerate}
\item[(1)] $(\Sigma; \alpha, \beta)$ is a standard diagram of $\Sigma_{p,b} \times I$;
\item[(2)] In $H_1(\Sigma)$, we have \[ [\gamma_i] = - \sum_{j=1}^{g-p} \widetilde{Q}_{ij} [\alpha_j] - [\beta_i] - d_i [\eta_i], \] where $Q$ is the intersection form of $X$ and $\widetilde{Q} = A_{\psi}^{-1} \oplus Q \oplus \langle 0 \rangle^{k-l}$.
\end{enumerate}
\end{theorem*}

Let $\{ \zeta_{g-p-k+l+1}, \dots , \zeta_{g-p} \}$ be a collection of curves on $\Sigma$ such that $\{ \alpha_i, \beta_i, \\ \alpha_{g-p-k+l+j}, \zeta_{g-p-k+l+j} \}$ is geometrically symplectic for a standard diagram $(\Sigma ; \alpha, \beta)$ of $\Sigma_{p,b} \times I \, \sharp \, ( \sharp^{k-l} S^1 \times S^2)$.
\begin{theorem*}
Suppose $H_1(X) = H_1(\partial X) = 0$, and $X$ has a $(g, (k, l, l); p, b)$-relative trisection $\mathcal{T}_X$. Then, $\mathcal{T}_X$ admits a diagram $(\Sigma; \alpha, \beta, \gamma)$, and there exists a collection $\eta$ which consists of $l$ simple closed curves in $\Sigma$ such that:
\begin{enumerate}
\item[(1)] $(\Sigma; \alpha, \beta)$ is a standard diagram of $\Sigma_{p,b} \times I \, \sharp \, ( \sharp^{k-l} S^1 \times S^2)$, and then there exists a collection $\zeta$ for this diagram;
\item[(2)] In $H_1(\Sigma)$, we have \[ [\gamma_i] = \begin{cases} -[\alpha_i] - \sum_{j=1}^{g-p} \widetilde{Q}_{ji} [\beta_j] - d_i [\eta_i] & (i \le g-p-k+l), \\ - [\zeta_i] & ( i > g-p-k+l),  \end{cases} \] where $Q$ is the intersection form of $X$ and $\widetilde{Q} = A_{\psi}^{-1} \oplus Q \oplus \langle 0 \rangle^{k-l}$.
\end{enumerate}
\end{theorem*}
\noindent For these two versions of the theorem, the same statement as the Corollary 5.2 also holds.

In the closed case, the Theorem 1.1 in \cite{L-C1} says that an element $\rho$ of the Torelli group can be reduced by an element of the Johnson kernel in the similar situation to the Corollary 5.2 of this paper (regarding the last $(k, l, l)$-version of the theorem). Therefore, a natural question arises:

\begin{question*}
In the situation of the Corollary 5.2, can $\rho \in \mathcal{I}(\Sigma)$ be reduced by an element of $\mathcal{K}(\Sigma)$?
\end{question*}

\section{Another boundary case}
We can prove that the similar situation holds in the case where $\partial X$ is diffeomorphic to $\sharp^r S^1 \times S^2$. However, we need to impose more conditions to the monodromy of $\partial X$, which it might be avoided essentially.

\begin{theorem*}
Suppose $H_1(X) = 0$, $\partial X \cong \sharp^r S^1 \times S^2$, $X$ has a $(g, (l, l, k); p, b)$-relative trisection $\mathcal{T}_X$ satisfying $b-1 \ge r$, and there exist a cut system of arcs $a$ for $\Sigma_1$ and a collection of curves $\eta$ satisfying $\langle [a_i], [\eta_j] \rangle_{\Sigma_1} = \delta_{ij}$ such that the representation matrix of the monodromy action $A_{\psi}$ regarding bases $\{[a_i]\}$ and $\{[\eta_i]\}$ is $A \oplus \langle 0 \rangle^r$, where $A$ is a unimodular $(l-r, l-r)$-matrix, and $\{[\eta_{2k-1}], [\eta_{2k}]\}_{k \le p}$ is symplectic. Then, $\mathcal{T}_X$ admits a diagram $(\Sigma; \alpha, \beta, \gamma)$ such that:
\begin{enumerate}
\item[(1)] $(\Sigma; \alpha, \beta)$ is a standard diagram of $\Sigma_{p,b} \times I$;
\item[(2)] In $H_1(\Sigma)$, we have \[ [\gamma_i] = -[\alpha_i] - \sum_{j=1}^{g-p} \widetilde{Q}_{ji} [\beta_j] - d_i [\eta_i], \] where $\widetilde{Q} = A^{-1} \oplus \begin{pmatrix} O & I_r \\ I_r & O \end{pmatrix} \oplus Q \oplus \langle 0 \rangle^{k-l}$ and $Q$ is a unimodular $(b_2-r, b_2-r)$-matrix such that $\langle 0 \rangle^r \oplus Q$ is the representation matrix of the intersection form of $X$.
\end{enumerate}
\end{theorem*}

To prove this version, we review the several calculations of the homology and the intersection form by using trisection diagram.

\begin{proposition}[Theorem 2 in \cite{Tan1}]
The homology of $X$ can also be obtained from the following chain complex $C^Z$: \vspace{-1ex}
{\small \begin{figure}[H]
\begin{tikzcd}
  0 \ar{r} &[-1.5em] (L_{1} \cap L_{2}) \oplus (L_{2} \cap L_{3}) \oplus (L_{3} \cap L_{1}) \ar{r}{\zeta} &[-1.2em] L_{1} \oplus L_2 \oplus L_{3} \ar{r}{\iota} &[-1.2em] H_1(\Sigma) \ar{r}{0} &[-1.2em] \mathbb{Z} \ar{r} &[-1.5em] 0,
\end{tikzcd}
\end{figure}} \vspace{-2ex}
\noindent
where $\zeta (x, y, z) = (x-z, y-x, z-y)$ and $\iota$ is a homomorphsim induced by the inclusions $\iota_{\nu}$.
\end{proposition}

\begin{proposition}[Theorem 4.9 in \cite{MS1}]
The homology of $(X, \partial X)$  can be obtained from the following chain complex $C^Z_{\partial}$: \vspace{-1ex}
{\small \begin{figure}[H]
\begin{tikzcd}
  \mathbb{Z} \ar{r}{0} &[-1.5em] (L^{\partial}_{1} \cap L^{\partial}_{2}) \oplus (L^{\partial}_{2} \cap L^{\partial}_{3}) \oplus (L^{\partial}_{3} \cap L^{\partial}_{1}) \ar{r}{\zeta^{\partial}} &[-1.2em] L^{\partial}_{1} \oplus L^{\partial}_2 \oplus L^{\partial}_{3} \ar{r}{\iota^{\partial}} &[-1.2em] H_1(\Sigma, \partial \Sigma) \ar{r} &[-1.2em] 0,
\end{tikzcd}
\end{figure}} \vspace{-2ex}
\noindent
where $\zeta^{\partial} (x, y, z) = (x-z, y-x, z-y)$ and $\iota^{\partial}$ is a homomorphsim induced by the inclusions $\iota_i^{\partial}$.
\end{proposition}

\begin{proposition}[Theorem 5.1 in \cite{MS1}]
Suppose $h_1 = [(x_1, x_2, x_3)] \in H_2(X)$ and $h_2 = [(y_1, y_2, y_3)] \in H_2(X, \partial X)$. Then, we have 
\[\langle h_1, h_2 \rangle_{(X, \partial X)} = -\sum_{1 \le i < j \le 3} \langle x_i, y_j \rangle_{\Sigma}, \]
where $\langle \cdot , \cdot \rangle_{(X, \partial X)}$ is the intersection form on $H_2(X) \times H_2(X, \partial X)$.
\end{proposition}

\begin{proposition}
The homology of $\partial X$ can be calculated by the following complex:
\vspace{-2ex}
\begin{figure}[H]
\begin{tikzcd}
0 \ar{r} & \mathbb{Z} \ar{r}{0} & H_1(\Sigma_1 , \partial \Sigma_1) \ar{r}{\xi_{\psi}} & H_1(\Sigma_1) \ar{r}{0} & \mathbb{Z} \ar{r} & 0.
\end{tikzcd}
\end{figure}
\end{proposition}

Lemma 4.2 still holds in this situation. Let $f_i$ be the following element of $L_1$;
\[ f_i = \begin{cases} -[a_{2k}]    & ( i \le 2p, \, i = 2k-1) \\
                           [a_{2k-1}] & ( i \le 2p, \, i = 2k) \\
                              0.         & ( 2p < i)
          \end{cases}\]
We take the following elements of $L_1 \oplus L_2 \oplus L_3$ and $L^{\partial}_1 \oplus L^{\partial}_2 \oplus L^{\partial}_3$;
\[  \mu_i = ( [\alpha_i], \sum_j (\alpha \cdot \gamma)_{ji} [\beta_j], [\gamma_i]), \, (l < i) \] 
\[\nu_i = ( [\alpha_i] + f_i, \sum_j (\alpha \cdot \gamma)_{ji} [\beta_j], [\gamma_i]).\]
We can grant that $\mu_i$ are in $\ker \iota$ and $\ker \iota^{\partial}$, and $\mu_i = \nu_i$. Because $[\eta_{2k}] = [a_{2k-1}]$ and $[\eta_{2k-1}] = -[a_{2k}]$ for $k \le p$, and $[\eta_i] = 0$ otherwise in $H_1(\Sigma, \partial \Sigma)$, $\nu_i$ are in $\ker \iota^{\partial}$. Let $i_* : H_2(\partial X) \to H_2(X)$, $j_* : H_2(X) \to H_2(X, \partial X)$ and $\partial_* : H_2(X, \partial X) \to H_1(\partial X)$ be the homomorphisms derived from the exact sequence of $(X, \partial X)$.

\begin{lemma}
The collection $\{[\mu_{l+1}], \dots , [\mu_{l+b_2}] \}$ forms a basis of $H_2(X)$, and $\{[\nu_1], \dots , [\nu_{l+b_2}]\}$ generates $H_2(X, \partial X)$. Moreover, we have $j_*([\mu_i]) = [\nu_i]$, and $\partial_*([\nu_i]) = - d_i [\eta_i] \in H_1(\partial X)$.
\end{lemma}
\begin{proof}
First generativity follows from the Lemma 4.2 and independence of $\{[\alpha_i], [\beta_i], [\eta_j] \}$. It is clear that $\{\mu_i\}$ are linearly independent in $\ker \iota$. From these facts and $L_1 \cap L_2 = L_2 \cap L_3 = 0$, first statement follows. 

We can choose $\{[\alpha_i], [a_j]\}$, $\{[\beta_i], [a_j]\}$ and $\{[\gamma_i], [a^3_j]\}$ as bases of $L_1^{\partial}$, $L_2^{\partial}$ and $L_3^{\partial}$, respectively. Since $[a^3_j] = [a_j] - [\beta_j]$ and $\{[\alpha_i], [\beta_i], [a_j]\}$ is linearly independent in $H_1(\Sigma, \partial \Sigma)$, $\ker \iota^{\partial}$ can be generated by $\{\nu_i, \zeta^{\partial}(-[a_j], -[a^3_j], 0)\}$, and then $H_2(X, \partial X)$ can be generated by $\{\nu_i\}$. It is easy to see $j_*([\mu_i]) = [\nu_i]$.

For $i = 2k$, since the chain complex $C^Z_{\partial}$ is derived from CW-complex, or handle decomposition, we can regard $\nu_i$ as a linear combination of cells whose boundaries are $\alpha_i$, $\partial (a_{i-1} \times I)$, $\beta_j$ and $\gamma_i$. Let $\bar{X}^q$ be the $q$-skeleton of this CW-complex. Since $\iota^{\partial} (\nu_i) = 0$, we have $\partial_2 \nu_i = 0$ in $H_1(\bar{X}^1, \bar{X}^0)$, and then in $H_1(\bar{X}^1, \partial X)$ (by using the exact sequence of $(\bar{X}^1, \bar{X}^0, \partial X)$). $\partial_2 \nu_i$ can be represented by the singular chain $\alpha_i + \partial (a_{i-1} \times I) + \sum_j (\alpha \cdot \gamma)_{ji} \beta_j + \gamma_i$, and then this is reduced by $\partial (a_{i-1} \times I) - \eta_i$. Since $\partial_2 \nu_i = 0$, it can be modified to be in $\partial X$. Actually, we can choose $\partial (a_{i-1} \times D^1) - \eta_i$ as a representative because $a_{i-1}$ and $\eta_i$ are disjoint from $\alpha$, where $D^1$ is the arc of the base circle of the open book decomposition. Since $[\partial (a_{i-1} \times D^1)] = 0$ in $H_1(\partial X)$, we have $\partial_*([\nu_i]) = -[\eta_i]$. Other $i$ cases are followed in the same way.
\end{proof}
\noindent Note that when we treat $-[\eta_i]$ as an element of the homology calculated by the Proposition B.4, it can be regarded as the element represented by $-\eta_i \in \Sigma_1$.

We give a proof of another version of the theorem.

\begin{proof}
Let $a$ and $\eta$ be collections of curves in the assumption. In the same way as the proof of previous version, we can take a trisection diagram $(\Sigma; \alpha, \beta, \gamma)$ satisfying the conditions of Lemma 4.2. 

Since $H_2(\partial X) \cong H_1(\partial X) \cong \mathbb{Z}^r$, the following exact sequences derived the pair $(X, \partial X)$ are split;
\vspace{-2ex}
\begin{figure}[H]
\begin{tikzcd}
0 \ar{r} & H_2(\partial X) \ar{r}{i_*} & H_2(X) \ar{r}{j_*} & \im j_* \ar{r}{0} &  0,
\end{tikzcd}
\vspace{-1ex}
\end{figure}
\begin{figure}[H]
\begin{tikzcd}
0 \ar{r} & \im j_*  \ar{r} & H_2(X, \partial X) \ar{r}{\partial_*} & H_1(\partial X) \ar{r}{0} &  0.
\end{tikzcd}
\end{figure}
\noindent Note that $\{[a_{l-r+1}], \dots , [a_l]\}$ and $\{[\eta_{l-r+1}], \dots , [\eta_l]\}$ are bases of $H_2(\partial X)$ and $H_1(\partial X)$, respectively. Applying the previous argument regarding a basis of $\langle [a_{l-r+1}], \dots , [a_l] \rangle \oplus \im j_*$, we take a new diagram $(\Sigma; \alpha, \beta, \gamma)$ satisfying $[\gamma_{l+i}] = i_* ([a_{l-r+i}])$ for $1 \le i \le r$. At this time, $\{[\nu_{l-r+1}], \dots , [\nu_l], [\nu_{l+r+1}], \dots , [\nu_{l+b_2}]\}$ forms a basis of $H_2(X, \partial X)$ from Lemma B.5. Since $j_* \circ  i_* = 0$, we have $\langle [\gamma_{l+i}], x \rangle_X = 0$ for $1 \le i \le r$ and $x \in H_2(X)$. Therefore, we can represent the intersection matrix of $X$ as $\langle 0 \rangle^r \oplus Q$, where $Q$ is a $(b_2-r, b_2-r)$-matrix. 

Next, we consider the intersection form of $(X, \partial X)$. Using Proposition B.3, we have $\langle [\mu_i], [\nu_j] \rangle_{(X, \partial X)} = (\alpha \cdot \gamma)_{ji}$ for $l < i$. Since $\langle i_*(x), y \rangle_{(X, \partial X)} = \langle x, \partial_*(y) \rangle_{\partial X}$ for $x \in H_2(\partial X)$ and $y \in H_2(X, \partial X)$, its representation matrix regarding $\{[\mu_{l+1}], \dots , [\mu_{l+b_2}]\}$ and $\{[\nu_{l-r+1}], \dots , [\nu_l], [\nu_{l+r+1}], \dots , [\nu_{l+b_2}]\}$ can be partitioned into 4 blocks such that the upper left is the intersection matrix of $\partial X$, the lower left is the $(b_2-r, r)$-zero matrix, and the lower right is $Q$. Since the intersection form of $\partial X$ $(\cong \sharp^r S^1 \times S^2)$ and $(X, \partial X)$ are unimodular, so $Q$ is. Note that the intersection matrix of $\partial X$ is $I_r$ regarding the bases $\{[a_{l-r+1}], \dots [a_l]\}$ and $\{[\eta_{l-r+1}], \dots , [\eta_l]\}$. Since $[\nu_i] = 0$ for $i \le l-r$, $(\alpha \cdot \gamma)_{ij} = 0$ for $l  < j$. Using Lemma 4.3, we can modify $\alpha \cdot \gamma$ to the following form;
\[\alpha \cdot \gamma = \begin{pmatrix} B & D \\ {}^tD & O_r \end{pmatrix} \oplus Q \oplus \langle 0 \rangle^{k-l}, \]
where ${}^tD = \begin{pmatrix} O_{r, l-r} & I_r \end{pmatrix}$. Moreover, since the matrix $\alpha \cdot \gamma$, and so $B$, is almost symmetry, we can modify it to the objective form by using the previous argument regarding $I_r$, that is, we have $\alpha \cdot \gamma = B \oplus \begin{pmatrix} O & I_r \\ I_r & O \end{pmatrix} \oplus Q \oplus \langle 0 \rangle^{k-l}$. We can prove $B = A^{-1}$ in the same way as the last part of the proof.
\end{proof}

We consider open book decompositions of $\sharp^r S^1 \times S^2$. Generally, the following theorem regarding open book decomposition:

\begin{proposition}[Theorem 3.5 \cite{PZ1}]
Any two open book decompositions of a closed, connected, oriented 3-manifold $M$ can be related by isotopy after applying  (positive or negative) stabilizations, and the special case of move $\partial U$.
\end{proposition}
\noindent The $\partial U$-move is derived from the $U$-move for an achiral Lefschetz fibration $X$ whose boundary is $M$. Moreover, since this $U$-move is local and similar to the stabilization, it can be described by relative trisection diagram. Therefore, the following proposition holds:
\begin{proposition}
Any two relative trisection of $X$ can be made isotopic a finite number of interior stabilizations, (positive or negative) relative stabilizations and $U$-moves.
\end{proposition}

\noindent It is known that $\sharp^r S^1 \times S^2$ has the unique standard tight contact structure. Therefore, it is sufficient to consider open book decompositions supporting this tight structure. Regarding the assumption of the theorem, the following question arises:

\begin{question*}
For any open book decompositions of $\sharp^r S^1 \times S^2$ supporting the standard tight contact structure, does it exist a modification by a finite number of Hopf stabilizations satisfying the assumption? What if the $\partial U$-moves are allowed?
\end{question*}

\noindent If an affirmative answer is given to this question, we can represent cork twist for a simply connected 4-manifold as cutting and regluing regarding a relative trisection diagram by an element of the Torelli group. That is to say, we can restate the following famous theorem by using relative trisection.

\begin{proposition}[\cite{Ma1}, \cite{CFHS1}, \cite{AM1}]
For any simply connected closed exotic pair of 4-manifolds $X$ and $Y$, there exist a contractible 2-handlebody $C$, an embedding $C \to X$ and an involution $\tau : \partial C \to \partial C$ such that the cork twist $X(C, \tau)$ is $Y$.
\end{proposition}

\noindent For these $X$ and $Y$, $X \setminus C = Y \setminus C$ can be regarded as a handlebody with no 0-handle, $r$ 3-handles and one 4-handle. Therefore, $X \setminus \natural^r S^1 \times D^3$ and $Y \setminus \natural^r S^1 \times D^3$ are 2-handlebodies with boundary $\sharp^r S^1 \times S^2$. They have $(g, l; p, b)$-trisections $\mathcal{T}_X$ and $\mathcal{T}_Y$ which induce the same open book decomposition because of Proposition B.6. Moreover, if the Question is solved with affirmative answer, we can take this decomposition such that it satisfies the assumption. In the same way as the proof of the previous corollaries, the following conjecture is true.

\begin{conjecture*}
Let $X$, $Y$ be an exotic pair of smooth, oriented, simply connected, closed of 4-manifolds. Then, there are embeddings $\natural^r S^1 \times D^3 \to X$ and $\natural^r S^1 \times D^3 \to Y$,  $(g, l; p, b)$-trisection diagram $(\Sigma; \alpha, \beta, \gamma)$ of $X \setminus \natural^r S^1 \times D^3$, and an element $\rho \in \mathcal{I}(\Sigma)$ such that $(\Sigma; \alpha, \beta, \rho(\gamma))$ is a diagram of $Y \setminus \natural^r S^1 \times D^3$.
\end{conjecture*} 

Whether the Question is affirmative or not, open book decomposition (or contact structure) and the Torelli group  may play a more important role in the 4-dimensional topology than before.

\end{document}